\theoremstyle{plain}
\newtheorem{theorem}{Theorem}[section]
\newtheorem{proposition}[theorem]{Proposition}
\newtheorem{lemma}[theorem]{Lemma}
\theoremstyle{definition}
\newtheorem{definition}[theorem]{Definition}
\theoremstyle{remark}
\newtheorem{remark}[theorem]{Remark}
\newcommand{\norm}[1]{\left|\left|{#1}\right|\right|}
\newcommand{\prox}{\operatorname{Prox}}
\newcommand{\id}{\operatorname{Id}}
\renewcommand{\Im}{\operatorname{Im}}
\newcommand{\Fix}{\operatorname{Fix}}
\newcommand{\noise}{\xi}
\newcommand{\dime}{n}
\DeclareMathOperator*{\argmin}{arg\,min}
\newcommand{\reglambda }{\lambda}
\icmltitlerunning{Proximal Denoiser for Convergent Plug-and-Play Optimization with Nonconvex Regularization}
\begin{document}

\twocolumn[
\icmltitle{Proximal Denoiser for Convergent Plug-and-Play Optimization with Nonconvex Regularization}

\begin{icmlauthorlist}
\icmlauthor{Samuel Hurault}{yyy}
\icmlauthor{Arthur Leclaire}{yyy}
\icmlauthor{Nicolas Papadakis}{yyy}
\end{icmlauthorlist}

\icmlaffiliation{yyy}{Univ. Bordeaux, Bordeaux INP, CNRS, IMB, UMR 5251,F-33400 Talence, France}

\icmlcorrespondingauthor{Samuel Hurault}{samuel.hurault@math.u-bordeaux.fr}

\icmlkeywords{Machine Learning, ICML}

\vskip 0.3in
]

\printAffiliationsAndNotice{} %

\begin{abstract} %
Plug-and-Play (PnP) methods solve ill-posed inverse problems through iterative proximal algorithms by replacing a proximal operator by a denoising operation. When applied with deep neural network denoisers, these methods have shown state-of-the-art visual performance for image restoration problems. However, their theoretical convergence analysis is still incomplete. Most of the existing convergence results consider nonexpansive denoisers, which is non-realistic, or limit their analysis to strongly convex data-fidelity terms in the inverse problem to solve. Recently, it was proposed to train the denoiser as a gradient descent step on a functional parameterized by a deep neural network. Using such a denoiser guarantees the convergence of the PnP version of the Half-Quadratic-Splitting (PnP-HQS) iterative algorithm.
 In this paper, we show that this gradient denoiser can actually correspond to the proximal operator of another scalar function.
 Given this new result, we exploit the convergence theory of proximal algorithms in the nonconvex setting to obtain convergence results for PnP-PGD (Proximal Gradient Descent) and PnP-ADMM (Alternating Direction Method of Multipliers).
 When built on top of a smooth gradient denoiser, we show that PnP-PGD and PnP-ADMM are convergent and target stationary points of an explicit functional.
These convergence results are confirmed with numerical experiments on deblurring, super-resolution and inpainting. \footnote{Code is available at \url{https://github.com/samuro95/Prox-PnP}.}
\end{abstract}

\section{Introduction}

    Many image restoration (IR) tasks can be addressed by solving an optimization problem
    \begin{equation}
        \label{eq:pb_general}
        x^*\in\argmin_x \reglambda  f(x)+g(x)
    \end{equation}
    where $f$ is a data-fidelity term, $g$ a regularization term and $\reglambda>0$ a parameter that controls the strength of the regularization.
    In particular, we will consider the case $f(x)=\frac1{2 \sigma^2}\norm{Ax-y}^2$ that corresponds to the linear observation model $y=A x^*+\noise$ where $y$ is the degraded image, $x^*$ is the clean image that we want to recover, $A$ is a linear operator and $\noise$ a white Gaussian noise of standard deviation~$\sigma$.
    In several applications (like deblurring or inpainting), the operator $A$ is non-invertible, thus leading to an ill-posed inverse problem.
    Including a term $g$ allows to cope with the ill-posed nature of this problem by assuming a-priori that $x$ must be regular in some sense.
    A long-standing problem consists in designing functions $g(x)$ that reflect a relevant regularity prior on $x$ while allowing for efficient numerical schemes to solve~\eqref{eq:pb_general}.

    For early image models, based e.g. on Fourier spectrum~\citep{ruderman1994statistics}, total variation~\citep{ROF}, wavelet sparsity~\citep{mallat2009sparse} or patch-based Gaussian mixtures~\citep{zoran2011learning}, the problem~\eqref{eq:pb_general} can be solved either explicitly (with linear filtering) or with provably convergent algorithms (like iterative thresholding~\citep{daubechies2004iterative}).
    More recently, tremendous progress has been made in IR by adopting deep image models that are learned from a database of clean images~\citep{lunz2018adversarial,prost2021learning,gonzalez2021solving}.
    It is still an open question to understand how  deep models can be encoded in a regularization term $g(x)$ that has sufficient properties to apply well-known optimization techniques.
    
    One fruitful way to tackle this problem is to rely on first-order proximal splitting algorithms~\citep{CombettesPesquet}.
    These algorithms operate individually on $f$ and $g$ via the proximity operator defined for a  stepsize $\tau>0$ as
    \begin{equation}\label{def:prox}
    \prox_{\tau f}(x)=\argmin_z \frac{1}{2\tau}||x-z||^2+f(z),
  \end{equation}
  Among these algorithms, the Proximal Gradient Descent~(PGD) (also called Forward-Backward Splitting) alternates between a proximal operation on $g$ and a gradient descent step on $f$, when $f$ is differentiable.
  In more general settings,  Half-Quadratic-Splitting (HQS), Alternating Direction Method of Multipliers (ADMM) or Douglas-Rachford Splitting (DRS) use proximal operations for both $f$ and $g$.
 Plug-and-Play (PnP) methods~\citep{venkatakrishnan2013plug} draw an elegant connexion between proximal methods and deep image models by replacing the proximity operator of $g$ with an image denoiser.
  State-of-the-art results for various IR problems have then been obtained with PnP methods~\citep{meinhardt2017learning,buzzard2018pnp,ahmad2020pnp,Yuan_2020_CVPR,zhang2021plug}, but with scarce theoretical guarantees. 
  Indeed, since a generic deep denoiser cannot be expressed as a proximal mapping~\citep{Moreau1965} in general, convergence results relying on properties of the proximal operator do not follow easily.
  Moreover, the regularizer $g$ is only made implicit via the denoising operation.
  Therefore, PnP algorithms do not seek the minimization of an explicit objective functional which strongly limits their interpretation and numerical control.

In this paper, we tackle these issues by shedding a new light on the gradient-step denoiser proposed in~\cite{hurault2022gradient, cohen2021has}, which writes
        \begin{equation}\label{def:gs}
        D_\sigma = \id - \nabla g_{\sigma}
        \end{equation}
where $g_\sigma : \mathbb{R}^n \to \mathbb{R}$ is a scalar function parameterized by a differentiable neural network.
\citep{hurault2022gradient} have shown that, if this denoiser is used in PnP-HQS, one can ensure convergence of the iterates towards a stationary point of an explicit functional.
 This trick works specifically for PnP-HQS and does not extend to other PnP frameworks.

    \paragraph{Contributions.} In this work, we propose to plug a denoiser trained as a proximal mapping, \emph{i.e.} an {\em implicit} gradient step. We first make use of the results from~\cite{gribonval2011should,gribonval2020characterization} on the characterization of proximity operators, to demonstrate that the gradient-step denoiser~\eqref{def:gs} can actually be the exact proximal operator of some nonconvex smooth function. With this result, we  show that PnP-PGD, PnP-ADMM and PnP-DRS algorithms are guaranteed to converge to stationary points of an explicit functional. 

     \section{Related works} \label{sec:related_works}

    PnP methods were applied successfully on several IR tasks by using off-the-shelf deep denoisers such as DnCNN~\citep{zhang2017beyond} or DRUNet~\citep{zhang2021plug}.
    These denoisers are used in various proximal algorithms such as HQS~\citep{zhang2017learning,zhang2021plug}, ADMM and DRS~\citep{romano2017little,ryu2019plug}, Proximal Gradient Descent (PGD) \citep{terris2020building}.
    While convergence of PnP methods using pseudo-linear denoisers is established~\citep{sreehari2016plug,gavaskar2020plug,nair2021fixed,chan2019performance},
    little is known about convergence with deep denoisers.

     \paragraph{Nonexpansive denoisers}  Most of existing works intend to get convergence via contractive PnP fixed-point iterations.
    For instance, convergence of various proximal algorithms can be obtained by assuming the denoiser averaged~\citep{sun2019online}, firmly nonexpansive~\citep{sun2021scalable,terris2020building} or simply nonexpansive~\citep{reehorst2018regularization,liu2021recovery}.
    As native off-the-shelf deep denoisers are generally not 1-Lipschitz, several authors proposed to train deep denoisers with constrained Lipschitz constants.
    For example,~\citet{ryu2019plug} normalize each layer individually by its spectral norm, but with such a strategy, one cannot use residual skip connections, which are widely uses in deep denoisers.
    ~\citet{terris2020building} propose instead to enrich the training loss of the denoisers with a penalization on the spectral norm of the Jacobian.
    
    However, imposing nonexpansiveness to the denoiser is very likely to hurt its denoising performance (see Appendix~\ref{app:nonexpansive_denoisers}). ~\citet{ryu2019plug} argue that it is more realistic to suppose nonexpansiveness of the residual.
    This nevertheless comes at the cost of imposing strong convexity on the data term~$f$ in the IR problem~\eqref{eq:pb_general}, which excludes many tasks like deblurring, super-resolution and inpainting.
Finally, let us recall that all the strategies based on nonexpansiveness have another major limitation.
These methods target fixed points of some (firmly) nonexpansive or contractive operators but do not minimize an explicit functional, thus making convergence difficult to monitor.

 \paragraph{Gradient step denoisers}   With the Regularization by Denoising framework (RED),~\citep{romano2017little} shows that under homogeneity, nonexpansiveness and Jacobian symmetry conditions,
 a denoiser can be written as a gradient descent step realized on a convex potential (as in~\eqref{def:gs}).
 However, as shown in~\cite{reehorst2018regularization}, such conditions are unrealistic for deep denoisers.
 As a consequence, it is proposed in~\cite{cohen2021has} and~\cite{hurault2022gradient} to directly train the denoiser as an explicit gradient step performed on a possibly nonconvex function.
Plugging this denoiser in the PnP-HQS algorithm, it provides a convergent scheme that targets a stationary point of an explicit function. However, this strategy does not extend to other PnP frameworks such as PnP-PGD or PnP-ADMM.

\paragraph{Proximal denoisers} As PnP methods were built by replacing a proximal operation by a denoiser, it is interesting to investigate under which conditions
a denoiser can actually be a proximal map. The theorem of Moreau~\citep{Moreau1965} states that we get a proximal operator of some  \emph{convex} potential if and only if it is nonexpansive and the sub-gradient of a
convex function. \citep{sreehari2016plug} exploit this result to show the convergence of PnP-ADMM, but the analysis is thus limited to nonexpansive (and pseudo-linear) denoisers.
The purpose of this paper is to extend such results to deep image denoisers that are not nonexpansive.
To do so, we will focus on learning a deep image denoiser as the proximal operator of a \emph{nonconvex} functional. %
Under this condition, we provide new proofs of convergence of the iterates of PnP-PGD,  PnP-ADMM and PnP-DRS, towards stationary points of explicit functions.

\section{Proximal gradient step denoiser}
\label{sec:prox_denoiser}

Recently introduced in~\cite{hurault2022gradient} and \cite{cohen2021has} the Gradient Step (GS) Denoiser~\eqref{def:gs}  writes
\begin{equation}
        \label{eq:GS_den}
        D_\sigma = \nabla h_{\sigma},
        \end{equation}
with a potential \begin{equation}
        \label{eq:pot_h}
        h_\sigma : x \to \frac12 \norm{x}^2 - g_\sigma(x),     \end{equation}
obtained from a scalar function $g_\sigma : \mathbb{R}^n \to \mathbb{R}$  parameterized by a differentiable neural network.
This denoiser $D_\sigma$ is trained to denoise images degraded with Gaussian noise of level $\sigma$. In~\cite{hurault2022gradient}, it is shown that, although constrained to be an exact conservative field~\eqref{eq:GS_den},
it can realize state-of-the-art denoising.

In this section, we make use of the recent works~\cite{gribonval2011should,gribonval2020characterization} on the characterization of proximity operators
to show that this gradient denoiser can be written as a proximal operator.
These results generalize the theorem of Moreau~\citep{Moreau1965} to denoisers that may not be nonexpansive.
Applying Theorem 1 in~\cite{gribonval2020characterization}, we first have that if ${h_\sigma}$ is convex, the GS denoiser ${D_\sigma = \nabla h_\sigma}$ is linked to the proximal operator
of some function $\phi_\sigma : \mathbb{R}^n \to \mathbb{R} \cup \{+\infty\}$: ${\forall x \in \mathbb{R}^n}$, ${D_\sigma(x) \in \prox_{\phi_\sigma}(x)}=\argmin_z \frac12||x-z||^2+\phi_\sigma(z)$.
The next proposition shows that, if the residual $\id -D_{\sigma}$ is contractive, there exists a closed-form and smooth $\phi_\sigma$ such that $D_\sigma = \prox_{\phi_\sigma}$ is single-valued.

\begin{proposition}[proof in Appendix~\ref{app:proof_prop}]
    \label{prop:GSPnP_prox} Let $\mathcal{X}$ be an open convex subset of $\mathbb{R}^n$
    and $g_\sigma : \mathcal{X} \to \mathbb{R}$ a $\mathcal{C}^{k+1}$ function with $k \geq 1$ and $\nabla g_\sigma$ $L$-Lipschitz with $L<1$. Then, for $h_\sigma : x \to \frac12 \norm{x}^2 - g_\sigma(x)$
    and ${D_\sigma := \nabla h_\sigma =  \id - \nabla g_\sigma}$,
    \begin{itemize}
        \item[(i)] $h_{\sigma}$ is $(1-L)$-strongly convex and $\forall x \in \mathcal{X}$, $J_{D_{\sigma}}(x)$ is positive definite
        \item[(ii)] $D_\sigma$ is injective, $\Im(D_\sigma)$ is open and, $\forall x \in \mathcal{X}$, ${D_\sigma(x)\hspace{-1pt}=\hspace{-1pt}\prox_{\phi_\sigma}(x)}$,  with ${\phi_\sigma : \mathcal{X} \to \mathbb{R} \cup \{+\infty\}}$ defined by
        \begin{equation}\hspace*{-1cm}
            \label{eq:phi}
            \phi_\sigma(x):=\left\{\begin{array}{ll}   g_\sigma({D_\sigma}^{-1}(x)))-\frac{1}{2} \norm{{D_\sigma}^{-1}(x)-x}^2  \\\hspace{3.15cm} \text{if }\ x \in \Im(D_\sigma),\\
             +\infty \hspace{2.5cm}\text{otherwise},\end{array}\right.
        \end{equation}

        \item[(iii)] $\forall x \in \mathcal{X}$, $\phi_\sigma(x) \geq g_\sigma(x)$ and for $x \in \Fix(D_\sigma)$, $\phi_\sigma(x) = g_\sigma(x)$.
        \item[(iv)] $\phi_\sigma$ is $\mathcal{C}^k$ on $\Im(D_\sigma)$ and $\forall x \in \Im(D_\sigma)$, ${\nabla \phi_\sigma(x) = {D_\sigma}^{-1}(x) - x = \nabla g_\sigma ({D_\sigma}^{-1}(x))}$
        \item[(v)]  $\nabla \phi_\sigma$ is $\frac{L}{1-L}$-Lipschitz on $\Im(D_\sigma)$
    \end{itemize}
\end{proposition}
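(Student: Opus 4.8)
The plan is to read everything off the second-order behaviour of $g_\sigma$ together with the descent lemma, so the inverse function theorem does most of the structural work.

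\textbf{Items (i) and its structural consequences.} Since $g_\sigma$ is $\mathcal C^{k+1}$ with $k\ge 1$, $\nabla^2 g_\sigma$ exists and is symmetric, and the $L$-Lipschitzness of $\nabla g_\sigma$ gives $\norm{\nabla^2 g_\sigma(x)}\le L$, hence $-L\,\id\preceq\nabla^2 g_\sigma(x)\preceq L\,\id$ for every $x\in\mathcal X$. Therefore $\nabla^2 h_\sigma(x)=\id-\nabla^2 g_\sigma(x)\succeq(1-L)\,\id\succ 0$, which on the convex set $\mathcal X$ yields $(1-L)$-strong convexity of $h_\sigma$ and positive definiteness of $J_{D_\sigma}=\nabla^2 h_\sigma$. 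From this I would immediately record two facts used throughout: (a) $D_\sigma=\nabla h_\sigma$ is $(1-L)$-strongly monotone, so $D_\sigma(x)=D_\sigma(y)\Rightarrow x=y$, i.e.\ $D_\sigma$ is injective; (b) $J_{D_\sigma}(x)$ is invertible everywhere, so by the inverse function theorem $D_\sigma$ is an open map and $D_\sigma\colon\mathcal X\to\Im(D_\sigma)$ is a bijective $\mathcal C^k$-diffeomorphism, with $D_\sigma^{-1}$ of the same $\mathcal C^k$ regularity as $D_\sigma=\id-\nabla g_\sigma$. In particular $\Im(D_\sigma)$ is open.

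\textbf{Item (ii), the main point.} Fix $x\in\mathcal X$, put $y=D_\sigma(x)$, and aim to show $y$ is the \emph{unique} minimiser of $z\mapsto\tfrac12\norm{x-z}^2+\phi_\sigma(z)$. Only $z\in\Im(D_\sigma)$ can compete (elsewhere $\phi_\sigma=+\infty$); writing such $z=D_\sigma(w)=w-\nabla g_\sigma(w)$ with $w\in\mathcal X$, so that $w-z=\nabla g_\sigma(w)$, a direct expansion using \eqref{eq:phi} collapses the objective:
\[
\tfrac12\norm{x-z}^2+\phi_\sigma(z)=g_\sigma(w)+\langle\nabla g_\sigma(w),x-w\rangle+\tfrac12\norm{x-w}^2 ,
\]
while $z=y$ (the case $w=x$, using $x-y=\nabla g_\sigma(x)$) gives the value $g_\sigma(x)$. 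Hence the claim reduces to
\[
g_\sigma(x)\le g_\sigma(w)+\langle\nabla g_\sigma(w),x-w\rangle+\tfrac12\norm{x-w}^2\qquad\forall w\in\mathcal X ,
\]
with equality iff $w=x$; this is exactly the descent lemma for the $L$-smooth $g_\sigma$ on the convex set $\mathcal X$ (integrate $\nabla g_\sigma$ along $[w,x]$), and since $\tfrac L2<\tfrac12$ the inequality is strict for $w\ne x$. This proves $D_\sigma=\prox_{\phi_\sigma}$ and that the prox is single-valued.

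\textbf{Items (iii)--(v).} For (iii): if $x\notin\Im(D_\sigma)$ then $\phi_\sigma(x)=+\infty\ge g_\sigma(x)$; if $x=D_\sigma(w)$ then $\phi_\sigma(x)=g_\sigma(w)-\tfrac12\norm{\nabla g_\sigma(w)}^2$ and $\phi_\sigma(x)\ge g_\sigma(x)=g_\sigma(w-\nabla g_\sigma(w))$ is the ``sufficient decrease'' form of the descent lemma between $w$ and $w-\nabla g_\sigma(w)$ (using $1-\tfrac L2\ge\tfrac12$); on $\Fix(D_\sigma)$ one has $\nabla g_\sigma(x)=0$ and $D_\sigma^{-1}(x)=x$, giving equality. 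For (iv): on the open set $\Im(D_\sigma)$, $\phi_\sigma=g_\sigma\circ D_\sigma^{-1}-\tfrac12\norm{D_\sigma^{-1}-\id}^2$ is $\mathcal C^k$ by the regularity established above; differentiating by the chain rule and using the symmetry of $J_{D_\sigma^{-1}}$ together with the identity $D_\sigma^{-1}(x)-x=\nabla g_\sigma(D_\sigma^{-1}(x))$, the Jacobian terms cancel and leave $\nabla\phi_\sigma(x)=D_\sigma^{-1}(x)-x$ (equivalently, Fermat's rule $x-y\in\partial\phi_\sigma(y)$ at the minimiser $y=D_\sigma(x)$, once differentiability is in hand). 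For (v): from $x-y=\bigl(D_\sigma^{-1}(x)-D_\sigma^{-1}(y)\bigr)-\bigl(\nabla g_\sigma(D_\sigma^{-1}(x))-\nabla g_\sigma(D_\sigma^{-1}(y))\bigr)$ and the triangle inequality with the $L$-Lipschitz bound one gets $\norm{D_\sigma^{-1}(x)-D_\sigma^{-1}(y)}\le\tfrac1{1-L}\norm{x-y}$, while $\nabla\phi_\sigma(x)-\nabla\phi_\sigma(y)=\nabla g_\sigma(D_\sigma^{-1}(x))-\nabla g_\sigma(D_\sigma^{-1}(y))$ has norm $\le L\norm{D_\sigma^{-1}(x)-D_\sigma^{-1}(y)}$; composing yields the $\tfrac{L}{1-L}$ constant.

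The only genuinely delicate step is item (ii): one must recognise that, after substituting $z=D_\sigma(w)$ and the closed form of $\phi_\sigma$, the function to minimise is precisely the descent-lemma quadratic centred at $w$, and that $L<1$ (not merely $L\le1$) is exactly what forces $D_\sigma(x)$ to be the \emph{unique} minimiser, so that the prox is single-valued. Everything else is bookkeeping around the inverse function theorem and the $L$-smoothness of $g_\sigma$; the minor domain subtlety ($\Im(D_\sigma)$ versus $\mathcal X$) I would dispose of by noting that $\phi_\sigma$ equals $+\infty$ wherever it is not given by the closed form, so such points never enter the proximal minimisation.
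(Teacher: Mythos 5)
Your proof is correct, and on the central item (ii) it follows a genuinely different route from the paper. The paper also restricts the minimisation to $\Im(D_\sigma)$ via the substitution $z=D_\sigma(w)$, but then argues differentially: it computes $\nabla\Psi(w)=J_{D_\sigma}(w)\,(w-x)$ for $\Psi(w)=\phi_\sigma(D_\sigma(w))+\tfrac12\norm{x-D_\sigma(w)}^2$, restricts $\Psi$ to the segment joining $x$ and $w$, and uses positive definiteness of $J_{D_\sigma}$ to show the derivative along the segment has the sign of $t$, forcing a unique global minimum at $w=x$. You instead collapse $\Psi(w)$ algebraically to the quadratic $g_\sigma(w)+\langle\nabla g_\sigma(w),x-w\rangle+\tfrac12\norm{x-w}^2$ and invoke the descent lemma for the $L$-smooth $g_\sigma$, with the gap $\tfrac{1-L}{2}\norm{x-w}^2>0$ giving strict inequality for $w\neq x$; this is more elementary (a first-order inequality rather than a line-integral sign argument) and makes transparent exactly where $L<1$ is used, whereas the paper's argument localises the same information in the positive definiteness of $\id-\nabla^2 g_\sigma$. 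Your remaining items also differ in small ways that are all sound: injectivity via $(1-L)$-strong monotonicity of $\nabla h_\sigma$ instead of the paper's Rolle-type contradiction; (iii) by a direct descent-lemma computation (plus the $\Fix(D_\sigma)$ equality, which the paper's appendix does not spell out) instead of the paper's one-line use of the prox characterisation; (iv) by chain rule with cancellation of the Jacobian terms rather than the resolvent identity $D_\sigma=(\id+\nabla\phi_\sigma)^{-1}$; and (v) by the reverse triangle inequality in place of strong monotonicity plus Cauchy--Schwarz, yielding the same $\tfrac{L}{1-L}$ constant.
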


\begin{remark}
    \label{rem:prop}
    Note that despite $\phi_\sigma$ being possibly nonconvex, ${D_\sigma = \prox_{\phi_\sigma}}$ is one-to-one. In the literature, this is referred to as prox-regularity of $\phi_\sigma$. Also note that $D_\sigma$ is possibly not nonexpansive.
\end{remark}

PnP algorithms, such as PnP-PGD or PnP-ADMM, were built by replacing one proximal operation with a denoiser~$D_\sigma$.
When used with a denoiser $D_\sigma$ satisfying the assumptions of
Proposition~\ref{prop:GSPnP_prox}, PnP algorithms become classical proximal algorithms, with the notable difference that  a nonconvex function is involved.

\section{Convergence analysis of PnP methods with proximal denoisers}
\label{sec:conv_analysis}
In this section, we study the convergence of the three algorithms PnP-PGD, PnP-ADMM and PnP-DRS,  with a plugged denoiser $D_\sigma = \prox_{\phi_\sigma}$ that corresponds to the proximal operator of a nonconvex regularization function~$\phi_\sigma$.
For that purpose, we target the objective function
\begin{equation}
    \label{eq:F_tau}
    F_{\lambda,\sigma} := \reglambda  f + \phi_\sigma.
\end{equation}
where $f$ is a (possibly nonconvex) data-fidelity term, $\lambda$ is a regularization parameter and  $\phi_{\sigma}$ is defined as in Proposition~\ref{prop:GSPnP_prox} from the function $g_\sigma$ satisfying $D_\sigma=\id-\nabla g_\sigma$.
In all this analysis, to \hbox{use Proposition}~\ref{prop:GSPnP_prox}, $g_\sigma$ is assumed $\mathcal{C}^2$ with contractive gradient ($L<1$). We also assume $f$ and $g_\sigma$ bounded from below. 
From Proposition~\ref{prop:GSPnP_prox} (iii), we get that $\phi_\sigma$ and thus $F_{\lambda,\sigma}$ are also bounded from below.

We show, under some conditions on the parameters, that all three algorithms give convergence in terms of function values, and also convergence of the iterates if the generated sequences are bounded and $F_{\lambda,\sigma}$ verifies the Kurdyka-Lojasiewicz (KL) property.
The boundedness of the generated sequences is discussed in Appendix~\ref{app:bounded_seq}.
The KL property  has been widely used to study the convergence of optimization algorithms in the nonconvex setting~\citep{attouch2010proximal, attouch2013convergence, ochs2014ipiano}.
Very large classes of functions, in particular all the real semi-algebraic functions, satisfy this property.

\subsection{Convergence of PnP-PGD}\label{ssec:PGD}

The PnP-PGD algorithm is adapted from the standard proximal gradient descent (PGD) algorithm where the proximal step is replaced by a denoising operation.
In the usual explicit gradient step $\id -\tau\lambda \nabla f$ used in PGD, we fix the stepsize $\tau = 1$.
With a regularization parameter $\lambda >0$, PnP-PGD then writes
\begin{equation}
    \label{eq:PnP-PGD}
     \left\{\begin{array}{l} z_{k+1} = x_k - \reglambda  \nabla f(x_k) \\ x_{k+1} =  D_\sigma(z_{k+1})
            \end{array}\right.
\end{equation}
At each iteration, $x_k \in \Im(D_\sigma)$, and we get from relations~\eqref{eq:phi},~\eqref{eq:F_tau} and~\eqref{eq:PnP-PGD} the value of the objective function 
\begin{equation}
    F_{\lambda, \sigma}(x_k) = \reglambda  f(x_k) +  g_\sigma(z_k) -\frac{1}{2}\norm{z_k-x_k}^2 .
\end{equation}
To study the convergence of PnP-PGD, we make use of the literature on the convergence analysis~\cite{attouch2013convergence, beck2017first,li2015accelerated} of the PGD
for minimizing the sum of two possibly nonconvex functions. We assume here the data-fidelity term $f$ differentiable with Lipschitz gradient, but not necessarily convex.

\begin{theorem}[Convergence of PnP-PGD, proof in Appendix~\ref{app:proof_thm_PGD}]
    \label{thm:PnP-PGD}
    Let $g_\sigma : \mathbb{R}^n \to \mathbb{R} \cup \{+\infty\}$ of class $\mathcal{C}^2$ with $L$-Lipschitz gradient, $L<1$, and $D_{\sigma} := \id - \nabla g_\sigma$.
    Let $\phi_{\sigma}$ defined from $g_\sigma$ and $D_\sigma$ as in~\eqref{eq:phi}.
    Let $f : \mathbb{R}^n \to \mathbb{R} \cup \{+\infty\}$ differentiable with $L_f$-Lipschitz gradient.
    Assume that $f$ and $g_\sigma$ are bounded from below. Then, for $\reglambda L_f<1$, the iterates $x_k$
    given by the iterative scheme~\eqref{eq:PnP-PGD} verify
    \begin{itemize}
        \item[(i)] $(F_{\lambda, \sigma}(x_k))$ is non-increasing and converges.
        \item[(ii)] The residual $\norm{x_{k+1}-x_k}$ converges to $0$ at rate $\min_{k \leq K} \norm{x_{k+1}-x_k}^2 = \mathcal{O}(1/K)$
        \item[(iii)] All cluster points of the sequence $(x_k)$ are stationary points of $F_{\lambda,\sigma}$.
        \item[(iv)] Additionally suppose that $f$ and $g_{\sigma}$ are respectively KL and semi-algebraic, then if the sequence $(x_k)$ is bounded,
    it converges, with finite length, to a stationary point of $F_{\lambda,\sigma}$.
    \end{itemize}
\end{theorem}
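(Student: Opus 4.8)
The plan is to observe that, by Proposition~\ref{prop:GSPnP_prox}, the recursion~\eqref{eq:PnP-PGD} \emph{is} the proximal gradient descent on $F_{\lambda,\sigma}=\lambda f+\phi_\sigma$, the only non-standard feature being that $\phi_\sigma$ is nonconvex, though prox-regular, with $\prox_{\phi_\sigma}=D_\sigma$ single-valued. From there the argument fits the classical ``sufficient decrease $+$ relative error $+$ continuity $+$ KL'' template for nonconvex descent methods~\citep{attouch2013convergence}, so the work is mostly in checking each ingredient with the explicit $\phi_\sigma$ of~\eqref{eq:phi}. First I would prove the sufficient-decrease inequality: since $x_{k+1}=D_\sigma(z_{k+1})=\prox_{\phi_\sigma}(z_{k+1})$ (Proposition~\ref{prop:GSPnP_prox}(ii)), $x_{k+1}$ globally minimizes $z\mapsto\tfrac12\norm{z_{k+1}-z}^2+\phi_\sigma(z)$; comparing its value to that at $x_k$ and substituting $z_{k+1}=x_k-\lambda\nabla f(x_k)$ gives, after expanding the squares,
\[
\phi_\sigma(x_{k+1})+\tfrac12\norm{x_{k+1}-x_k}^2+\lambda\langle\nabla f(x_k),x_{k+1}-x_k\rangle\le\phi_\sigma(x_k).
\]
Bounding $\lambda\langle\nabla f(x_k),x_{k+1}-x_k\rangle$ from below with the descent lemma for $f$ ($\nabla f$ being $L_f$-Lipschitz) then yields
\[
F_{\lambda,\sigma}(x_{k+1})+\tfrac{1-\lambda L_f}{2}\norm{x_{k+1}-x_k}^2\le F_{\lambda,\sigma}(x_k).
\]
As $\lambda L_f<1$ the coefficient is positive, and $F_{\lambda,\sigma}$ is bounded below by Proposition~\ref{prop:GSPnP_prox}(iii) and the assumptions on $f,g_\sigma$; this gives (i). Summing telescopes to $\sum_k\norm{x_{k+1}-x_k}^2\le\tfrac{2}{1-\lambda L_f}\bigl(F_{\lambda,\sigma}(x_0)-\inf F_{\lambda,\sigma}\bigr)<\infty$, hence $\norm{x_{k+1}-x_k}\to0$ and $\min_{k<K}\norm{x_{k+1}-x_k}^2=\mathcal O(1/K)$, which is (ii).

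For (iii), take a cluster point $x^*=\lim_j x_{k_j}$. From $\norm{x_{k_j}-x_{k_j-1}}\to0$ one also gets $x_{k_j-1}\to x^*$, hence $z_{k_j}=x_{k_j-1}-\lambda\nabla f(x_{k_j-1})\to z^*:=x^*-\lambda\nabla f(x^*)$, and continuity of $D_\sigma=\id-\nabla g_\sigma$ forces $x^*=D_\sigma(z^*)\in\Im(D_\sigma)$. This is the key point: $\Im(D_\sigma)$ is open (Proposition~\ref{prop:GSPnP_prox}(ii)), so near $x^*$ the function $\phi_\sigma$ is genuinely $\mathcal C^1$ with $\nabla\phi_\sigma(x_{k+1})=D_\sigma^{-1}(x_{k+1})-x_{k+1}=z_{k+1}-x_{k+1}$ (Proposition~\ref{prop:GSPnP_prox}(iv)). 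Substituting $z_{k+1}=x_k-\lambda\nabla f(x_k)$ gives $\nabla F_{\lambda,\sigma}(x_{k+1})=(x_k-x_{k+1})+\lambda(\nabla f(x_{k+1})-\nabla f(x_k))$, so $\norm{\nabla F_{\lambda,\sigma}(x_{k+1})}\le(1+\lambda L_f)\norm{x_{k+1}-x_k}\to0$; passing to the limit along $(k_j)$ and using continuity of $\nabla F_{\lambda,\sigma}$ at $x^*$ yields $\nabla F_{\lambda,\sigma}(x^*)=0$.

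Finally, (iv) follows from the abstract convergence theorem for descent schemes under the KL inequality. The sufficient-decrease inequality is hypothesis (H1); the bound $\norm{\nabla F_{\lambda,\sigma}(x_{k+1})}\le(1+\lambda L_f)\norm{x_{k+1}-x_k}$ is the relative-error condition (H2); the continuity condition (H3) holds along any convergent subsequence since its limit lies in $\Im(D_\sigma)$, where $F_{\lambda,\sigma}$ is continuous. It then remains to verify that $F_{\lambda,\sigma}$ is KL: since $g_\sigma$ is $\mathcal C^2$ and semi-algebraic, so are $\nabla g_\sigma$ and $D_\sigma$; by Tarski--Seidenberg $\Im(D_\sigma)$ and the inverse $D_\sigma^{-1}$ are semi-algebraic, hence so is $\phi_\sigma$ via~\eqref{eq:phi}, and $F_{\lambda,\sigma}=\lambda f+\phi_\sigma$ is then KL. With $(x_k)$ bounded, the theorem gives $\sum_k\norm{x_{k+1}-x_k}<\infty$, so $(x_k)$ is Cauchy and converges, and its limit, being a cluster point, is a stationary point by (iii).

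I expect the main obstacle to be exactly the step highlighted in (iii): one must make sure the iterates, and every cluster point, stay inside the open set $\Im(D_\sigma)$ on which $\phi_\sigma$ is smooth, since otherwise neither the relative-error bound nor the KL machinery applies directly; a secondary technicality is that $\phi_\sigma$ need not be lower semicontinuous on $\partial\Im(D_\sigma)$, which is handled by localizing the KL argument around the (interior) cluster point, where $F_{\lambda,\sigma}$ is $\mathcal C^1$.
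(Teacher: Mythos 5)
Your proof is correct and for parts (i), (ii) and (iv) it follows essentially the same route as the paper: the sufficient-decrease inequality obtained by comparing the value of $z\mapsto\tfrac12\norm{z_{k+1}-z}^2+\phi_\sigma(z)$ at its global minimizer $x_{k+1}=\prox_{\phi_\sigma}(z_{k+1})$ with its value at $x_k$, combined with the descent lemma for $f$, then telescoping, then the Attouch--Bolte--Svaiter machinery with semi-algebraicity of $\phi_\sigma$ deduced from that of $g_\sigma$ via Tarski--Seidenberg. (Incidentally, your decrease constant $\tfrac{1-\lambda L_f}{2}$ is the right one; the paper's displayed $\tfrac12(1-L_f)$ drops a $\lambda$.)

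Where you genuinely diverge is in (iii). The paper characterizes stationary points as fixed points of $T_{\lambda,\sigma}=\prox_{\phi_\sigma}\circ(\id-\lambda\nabla f)$, proves that $T_{\lambda,\sigma}$ is globally $(1+\lambda L_f)(1+L)$-Lipschitz, and concludes by a triangle-inequality argument that any cluster point is a fixed point; this works entirely at the level of the (sub)differential inclusion $-\lambda\nabla f(x^*)\in\partial\phi_\sigma(x^*)$ and never needs the cluster point to sit in the open set $\Im(D_\sigma)$. You instead first show that every cluster point lies in $\Im(D_\sigma)$ (via $\norm{x_{k_j}-x_{k_j-1}}\to0$ and continuity of $D_\sigma$), use smoothness of $\phi_\sigma$ there to get the explicit relative-error bound $\norm{\nabla F_{\lambda,\sigma}(x_{k+1})}\le(1+\lambda L_f)\norm{x_{k+1}-x_k}$, and pass to the limit. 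Your route costs one extra topological step but buys the relative-error condition (H2) for free, which you then reuse directly in (iv); the paper instead gets (iv) by citing Theorem 5.1 of Attouch et al.\ wholesale and only checking the KL property. Both arguments are sound, and your identification of the membership of cluster points in $\Im(D_\sigma)$ as the delicate point is exactly right.
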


\begin{remark}
    \label{rem:relaxation}
    If  $\nabla g_\sigma$ has Lipschitz constant $L>1$,
    similar to~\cite{hurault2022gradient}, we can introduce  ${0 < \alpha < \frac{1}{L}}$ and relax the denoising operation by replacing $D_\sigma$ with ${D_\sigma^\alpha = \alpha D_\sigma + (1-\alpha)\id = \id - \alpha \nabla g_\sigma}$ in~\eqref{eq:PnP-PGD}. We can
    then define $\phi_{\sigma}^\alpha$ from $g_\sigma^\alpha =  \alpha g_\sigma$ as in~\eqref{eq:phi} and using the
    modified PnP-PGD algorithm $ x_{k+1}=D^\alpha_\sigma(x_k - \reglambda  \nabla f(x_k))$, we get the same convergence results than in Theorem~\ref{thm:PnP-PGD}.
 \end{remark}

Notice that in Theorem~\ref{thm:PnP-PGD}, the usual condition on the stepsize becomes a condition on the regularization parameter $\reglambda{L_f}<1$. The regularization trade-off parameter
is then limited by the value of $L_f$. Even if this is not usual in optimization, we argue that this is a not a problem as the regularization strength is also regulated by the $\sigma$ parameter 
which we are free to tune manually.

\subsection{Convergence of PnP-ADMM and PnP-DRS}
The classical PnP version of ADMM, PnP-ADMM, for a given stepsize $\tau>0$, can be written as
\begin{equation}
    \label{eq:PnP-ADMM}
     \left\{\begin{array}{l} y_{k+1} = \prox_{\tau \reglambda f}(x_k) \\ 
        z_{k+1} = D_\sigma(y_{k+1}+x_{k}) 
        \\ x_{k+1} =  x_{k} + (y_{k+1}-z_{k+1})
            \end{array}\right.
\end{equation}
Following~\cite{eckstein1994some}, %
one can show that PnP-ADMM is equivalent to PnP-DRS
\begin{equation}
    \label{eq:PnP-DRS}
     \left\{\begin{array}{ll} y_{k+1} &= \prox_{\tau \reglambda  f}(x_k) \\ 
     z_{k+1} &= D_\sigma(2y_{k+1}-x_{k}) \\ 
     x_{k+1} &=  x_{k} + \beta (z_{k+1}-y_{k+1})
            \end{array}\right.
\end{equation}
with a relaxation parameter $\beta=1$.

The authors of~\cite{li2016douglas} and~\cite{themelis2020douglas} propose convergence proofs of the DRS algorithm for the minimization of the sum of two nonconvex functions, one of the two functions being differentiable with  Lipschitz gradient. We will adapt their results to obtain convergence results of PnP-DRS. By equivalence,
the convergence of PnP-ADMM follows immediately.

\subsubsection{Differentiable data-fidelity term}

We first consider $f$ convex differentiable with Lipschitz gradient.
Like for PnP-PGD, we fix the stepsize at $\tau =1$ in~\eqref{eq:PnP-DRS} and use a regularization parameter $\lambda >0$. PnP-DRS with $\beta=1$ then realizes
\begin{equation}
    \label{eq:PnP-DRS2}
     \left\{\begin{array}{l} y_{k+1} = \prox_{\reglambda  f}(x_k) \\ 
        z_{k+1} = D_\sigma(2y_{k+1}-x_{k}) = \prox_{\phi_\sigma}(2y_{k+1}-x_{k})
         \\ x_{k+1} =  x_{k} + (z_{k+1}-y_{k+1})
            \end{array}\right.
\end{equation}
As in~\cite{themelis2020douglas}, we consider for the Lyapunov function the Douglas-Rachford Envelope%
\begin{equation}
    F_{\lambda, \sigma}^{DR,1}(x) = \phi_\sigma(z) + \reglambda  f(y) + \langle y-x, y-z \rangle + \frac{1}{2}\norm{y-z}^2%
 \label{eq:DRE}
\end{equation}
where, for a given $x$, $y$ and $z$ are obtained by the two first steps of the DRS iterations~\eqref{eq:PnP-DRS2}.

Under the same assumptions as Theorem~\ref{thm:PnP-PGD}, we now show that we also have convergence of PnP-DRS and PnP-ADMM towards stationary points of $F_{\lambda, \sigma}$.

\begin{theorem}[Convergence of PnP-DRS with $f$ differentiable, proof in Appendix~\ref{app:proof_thm_DRS}]
    \label{thm:PnP-DRS}
    Let ${g_\sigma : \mathbb{R}^n \to \mathbb{R} \cup \{+\infty\}}$ of class $\mathcal{C}^2$ with $L$-Lipschitz gradient with $L<1$.
    Let ${D_{\sigma} := \id - \nabla g_\sigma}$. Let $f : \mathbb{R}^n \to \mathbb{R} \cup \{+\infty\}$ be convex and differentiable with $L_f$-Lipschitz gradient.
    Assume that $f$ and $g_\sigma$ are bounded from below.
    Then, for $ \reglambda L_f<1$, the iterates $(y_k,z_k,x_k)$
    given by the iterative scheme~\eqref{eq:PnP-DRS2} verify
    \begin{itemize}
        \item[(i)] $(F_{\lambda, \sigma}^{DR,1}(x_k))$ is nonincreasing and converges. %
        \item[(ii)] The residual $\norm{y_{k}-z_k}$ converges to $0$ at rate $\min_{k \leq K} \norm{y_k - z_k}^2 = \mathcal{O}(1/K)$
        \item[(iii)] $(y_k)$ and $(z_k)$ have the same cluster points, all of them being stationary for $F_{\lambda,\sigma}$ with the same value of $F_{\lambda,\sigma}$.
        \item[(iv)] Additionally suppose that $f$ and $g_{\sigma}$ are respectively KL and semi-algebraic, then if the sequence $(y_k,z_k,x_k)$ is bounded,
    the whole sequence converges, and $y_k$ and $z_k$ converge to the same stationary point of $F_{\lambda,\sigma}$.
    \end{itemize}

\end{theorem}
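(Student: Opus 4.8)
The plan is to adapt the nonconvex Douglas--Rachford analysis of \cite{themelis2020douglas} (see also \cite{li2016douglas}), using the Douglas--Rachford envelope $F^{DR,1}_{\lambda,\sigma}$ of \eqref{eq:DRE} as a Lyapunov function. The enabling structural facts are exactly those supplied by Proposition~\ref{prop:GSPnP_prox}: (a) $D_\sigma=\prox_{\phi_\sigma}$ is single-valued, so the second step of \eqref{eq:PnP-DRS2} is a genuine proximal step on the proper lsc function $\phi_\sigma$; (b) $\phi_\sigma$ dominates $g_\sigma$, hence is bounded below, so $F_{\lambda,\sigma}=\reglambda f+\phi_\sigma$ is bounded below; (c) on $\Im(D_\sigma)$, which contains every iterate $z_k$, $\phi_\sigma$ is $\mathcal{C}^1$ with $\nabla\phi_\sigma(z)={D_\sigma}^{-1}(z)-z$. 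The role of the hypothesis $\reglambda L_f<1$ is the classical stepsize condition ``$\gamma<1/L$'' (here $\gamma=\tau=1$ and the smooth part is $\reglambda f$ with Lipschitz constant $\reglambda L_f$) under which the DR envelope is a valid merit function.

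First I would establish the sufficient-decrease inequality. Write $y=y_{k+1}$, $z=z_{k+1}$. The first step gives the identity $x_k-y=\reglambda\nabla f(y)$ and, combining convexity of $f$ with the descent lemma for $\reglambda f$, a quadratic estimate relating $\reglambda f(y)$, $\reglambda f(x_k)$ and $\norm{x_k-y}^2$. The second step is handled purely by the prox inequality: since $z$ minimizes $w\mapsto\tfrac12\norm{2y-x_k-w}^2+\phi_\sigma(w)$, comparing its value at $w=z$ and at $w=z_k\in\dom\phi_\sigma$ yields $\phi_\sigma(z)+\tfrac12\norm{2y-x_k-z}^2\le\phi_\sigma(z_k)+\tfrac12\norm{2y-x_k-z_k}^2$; crucially no smoothness of $\phi_\sigma$ is needed here, which matters because $\Im(D_\sigma)$ need not be convex. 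Inserting $x_{k+1}=x_k+(z-y)$ into \eqref{eq:DRE} and combining the two estimates, the cross terms telescope and one gets $F^{DR,1}_{\lambda,\sigma}(x_{k+1})\le F^{DR,1}_{\lambda,\sigma}(x_k)-c\norm{x_{k+1}-x_k}^2$ with $c=c(\reglambda L_f)>0$ precisely when $\reglambda L_f<1$, where $\norm{x_{k+1}-x_k}=\norm{z_{k+1}-y_{k+1}}$. A companion computation gives $F^{DR,1}_{\lambda,\sigma}(x_k)\ge F_{\lambda,\sigma}(z_{k+1})\ge\inf F_{\lambda,\sigma}>-\infty$, so $(F^{DR,1}_{\lambda,\sigma}(x_k))$ is nonincreasing and bounded below, hence convergent; this is (i), and telescoping the decrease gives $\sum_k\norm{y_k-z_k}^2<\infty$, hence (ii).

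Next, stationarity. The optimality conditions of the two steps are $x_k-y_{k+1}=\reglambda\nabla f(y_{k+1})$ and $2y_{k+1}-x_k-z_{k+1}=\nabla\phi_\sigma(z_{k+1})$, the latter being a gradient since $z_{k+1}\in\Im(D_\sigma)$ (Proposition~\ref{prop:GSPnP_prox}(iv)). Adding them, $\reglambda\nabla f(z_{k+1})+\nabla\phi_\sigma(z_{k+1})=(y_{k+1}-z_{k+1})+\reglambda(\nabla f(z_{k+1})-\nabla f(y_{k+1}))$, whose norm is $\le(1+\reglambda L_f)\norm{y_{k+1}-z_{k+1}}\to0$; this is the relative-error bound: the distance from $0$ to $\nabla F_{\lambda,\sigma}(z_{k+1})$ is $\mathcal{O}(\norm{x_{k+1}-x_k})$. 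If $x^\star$ is a cluster point of $(x_k)$, then along the corresponding subsequence $y_k\to x^\star$ (continuity of $\prox_{\reglambda f}$) and $z_k\to x^\star$ (since $\norm{y_k-z_k}\to0$); continuity of $F_{\lambda,\sigma}$ on $\Im(D_\sigma)$ gives $F_{\lambda,\sigma}(z_k)\to F_{\lambda,\sigma}(x^\star)$, which also equals $\lim_k F^{DR,1}_{\lambda,\sigma}(x_k)$ because the three extra terms in \eqref{eq:DRE} vanish as $\norm{y_k-z_k}\to0$; and $\nabla F_{\lambda,\sigma}(x^\star)=0$ by closedness of $\nabla F_{\lambda,\sigma}$. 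Together with $\norm{y_k-z_k}\to0$ this yields (iii). For (iv): $g_\sigma$ semi-algebraic makes $D_\sigma=\id-\nabla g_\sigma$ and its inverse semi-algebraic (the inverse of a semi-algebraic bijection is semi-algebraic), hence $\phi_\sigma$ of \eqref{eq:phi} is semi-algebraic, and with $f$ KL one concludes $F_{\lambda,\sigma}$ is a KL function; the sufficient decrease, the residual bound, the relative-error bound and the KL property then place us in the abstract framework of \cite{attouch2013convergence,themelis2020douglas}, which for a bounded sequence gives $\sum_k\norm{x_{k+1}-x_k}<\infty$, so $(x_k)$ converges, $y_k=\prox_{\reglambda f}(x_k)$ converges by continuity, and $z_k$ converges to the same limit since $\norm{y_k-z_k}\to0$; that limit is stationary by (iii).

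\textbf{Main obstacle.} I expect the delicate part to be the bookkeeping in the sufficient-decrease step: pinning down a positive constant $c$ under the single assumption $\reglambda L_f<1$ with relaxation $\beta=1$, and ensuring that every invocation of smoothness concerns $f$ (whose domain is all of $\mathbb{R}^n$) while $\phi_\sigma$ enters only through the prox inequality or through its gradient evaluated at points of $\Im(D_\sigma)$, so that the possible non-convexity of $\dom\phi_\sigma=\Im(D_\sigma)$ is never used. A secondary care point is making the KL property of $F_{\lambda,\sigma}$ rigorous (definability in a common o-minimal structure) and adapting the finite-length argument to the triple $(y_k,z_k,x_k)$ rather than a single iterate.
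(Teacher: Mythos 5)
Your proposal follows essentially the same route as the paper: the paper's entire proof of Theorem~\ref{thm:PnP-DRS} is a direct application of the nonconvex DRS result of \cite{themelis2020douglas} (restated as Theorem~\ref{thm:reference_DRS}) with $f_1=\lambda f$ (smooth, convex, so $M_{f_1}=0$) and $f_2=\phi_\sigma$ (proper lsc, entering only through its single-valued prox $D_\sigma$), the stepsize condition $\tau<\min(1/(2M_{f_1}),1/L_{f_1})$ reducing to $\lambda L_f<1$ at $\tau=1$; you simply unpack the internals of that cited theorem (DR envelope descent, relative-error bound, KL finite length) rather than invoking it as a black box, and you correctly identify that the non-smoothness of $\phi_\sigma$ outside $\Im(D_\sigma)$ is harmless here because all smoothness is charged to $f$. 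One small slip in your step (iii): a cluster point $x^\star$ of $(x_k)$ yields $y_{k_j+1}\to\prox_{\lambda f}(x^\star)$, not $y_{k_j}\to x^\star$, so the common limit of $(y_k)$ and $(z_k)$ at which stationarity is established is $\prox_{\lambda f}(x^\star)$; the rest of your argument (summed optimality conditions plus $\norm{y_k-z_k}\to 0$) goes through unchanged at that point.
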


\subsubsection{Non differentiable data-fidelity term}
To cope with a possibly non-differentiable data-fidelity term, and to get rid of the restriction on the trade-off parameter $\lambda$, we inverse the denoising and proximal steps in~\eqref{eq:PnP-DRS2}:
\begin{equation}
    \label{eq:PnP-DRS3}
     \left\{\begin{array}{l} y_{k+1} = D_\sigma(x_k) \\ 
        z_{k+1} = \prox_{\reglambda  f}(2y_{k+1}-x_{k}) \\ 
        x_{k+1} =  x_{k} + (z_{k+1}-y_{k+1})
            \end{array}\right.
\end{equation}
and adapt the Douglas-Rachford Envelope accordingly
\begin{align}
    F_{\lambda, \sigma}^{DR,2}(x) &= \phi_\sigma(y) + \reglambda  f(z) + \langle y-x, y-z \rangle + \frac{1}{2}\norm{y-z}^2%
    \label{eq:DRE2}
\end{align}
where, for a given $x$, $y$ and $z$ are obtained by the two first steps of the DRS iterations~\eqref{eq:PnP-DRS3}.

Convergence of this algorithm would be ensured for $\phi_\sigma$ Lipschitz differentiable on $\mathbb{R}^n$.
However, Proposition~\ref{prop:GSPnP_prox} shows that $\phi_\sigma$ is differentiable only on $\Im(D_\sigma)$.
We now show that, if the set $\Im(D_\sigma)$ is convex and ${L<\frac12}$, we still have convergence of the iterates in~\eqref{eq:PnP-DRS3}.

\begin{theorem}[Convergence of PnP-DRS with $f$ not differentiable, proof in Appendix~\ref{app:proof_thm_DRS2}]
    \label{thm:PnP-DRS2}
    Let $g_\sigma : \mathbb{R}^n \to \mathbb{R} \cup \{+\infty\}$ of class $\mathcal{C}^2$ with $L$-Lipschitz gradient with ${L < \frac12}$.
    Let ${D_{\sigma} := \id - \nabla g_\sigma}$. Assume that $\Im(D_\sigma)$ is convex. Let $f : \mathbb{R}^n \to \mathbb{R} \cup \{+\infty\}$ be proper lower-semicontinuous.
    Assume that $f$ and $g_\sigma$ are bounded from below.
    Then, $\forall \lambda>0$, the iterates $(y_k,z_k,x_k)$
    given by the iterative scheme~\eqref{eq:PnP-DRS3} verify %
    \begin{itemize}
        \item[(i)] $(F_{\lambda, \sigma}^{DR,2}(x_k))$ is nonincreasing and converging. %
        \item[(ii)] The residual $\norm{y_{k}-z_k}$ converges to $0$ at rate $\min_{k \leq K} \norm{y_k - z_k}^2 = \mathcal{O}(1/K)$ %
        \item[(iii)] For any cluster point $(y^*,z^*,x^*)$, $y^*$ and $z^*$ coincides to a stationary point of $F_{\lambda,\sigma}$. %
        \item[(iv)] Additionally suppose that $f$ and $g_{\sigma}$ are respectively KL and semi-algebraic, then if the sequence $(y_k,z_k,x_k)$ is bounded,
    the whole sequence converges, and $y_k$ and $z_k$ converge to the same stationary point of $F_{\lambda,\sigma}$.
    \end{itemize}

\end{theorem}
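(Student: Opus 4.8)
The plan is to use the Douglas--Rachford envelope $F_{\lambda,\sigma}^{DR,2}$ of~\eqref{eq:DRE2} as a Lyapunov function and run the nonconvex DRS analysis of~\cite{li2016douglas,themelis2020douglas}, treating $\phi_\sigma$ as the ``smooth'' summand with stepsize $1$, while being careful that by Proposition~\ref{prop:GSPnP_prox} this $\phi_\sigma$ is only $\mathcal{C}^1$ with $\tfrac{L}{1-L}$-Lipschitz gradient on $\Im(D_\sigma)$. The key structural observation is that along~\eqref{eq:PnP-DRS3} one always has $y_{k+1}=D_\sigma(x_k)=\prox_{\phi_\sigma}(x_k)\in\Im(D_\sigma)$, so $\phi_\sigma$ and $\nabla\phi_\sigma$ are only ever evaluated on $\Im(D_\sigma)$, with $\nabla\phi_\sigma(y_{k+1})=D_\sigma^{-1}(y_{k+1})-y_{k+1}=x_k-y_{k+1}=\nabla g_\sigma(x_k)$; and since $\Im(D_\sigma)$ is assumed convex and $L<\tfrac12$ makes $\tfrac{L}{1-L}<1$, the descent lemma for $\phi_\sigma$ holds along segments contained in $\Im(D_\sigma)$ with a constant strictly below $1$ --- precisely the ingredient the DRS estimates need.

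I would first establish the two pillars of the analysis. Boundedness below: substituting $\phi_\sigma(y_{k+1})=g_\sigma(x_k)-\tfrac12\norm{x_k-y_{k+1}}^2$ from~\eqref{eq:phi} into~\eqref{eq:DRE2} and completing the square gives
\[
F_{\lambda,\sigma}^{DR,2}(x_k)=g_\sigma(x_k)-\norm{\nabla g_\sigma(x_k)}^2+\tfrac12\norm{2y_{k+1}-x_k-z_{k+1}}^2+\lambda f(z_{k+1}),
\]
and the elementary bound $\norm{\nabla g_\sigma(x)}^2\le 2L\,(g_\sigma(x)-\inf g_\sigma)$ together with $L<\tfrac12$ yields $g_\sigma(x)-\norm{\nabla g_\sigma(x)}^2\ge \inf g_\sigma$, hence $F_{\lambda,\sigma}^{DR,2}\ge \inf g_\sigma+\lambda\inf f>-\infty$. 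Sufficient decrease: following the one-step estimate of~\cite{themelis2020douglas}, comparing $F_{\lambda,\sigma}^{DR,2}(x_{k+1})$ with $F_{\lambda,\sigma}^{DR,2}(x_k)$ via the optimality conditions of the two proximal steps of~\eqref{eq:PnP-DRS3} and the descent lemma for $\phi_\sigma$ on $\Im(D_\sigma)$ gives $F_{\lambda,\sigma}^{DR,2}(x_{k+1})\le F_{\lambda,\sigma}^{DR,2}(x_k)-c\norm{y_{k+1}-z_{k+1}}^2$ for a constant $c>0$ proportional to $\tfrac{1-2L}{1-L}$. These two facts give (i); telescoping, together with $\norm{x_{k+1}-x_k}=\norm{z_{k+1}-y_{k+1}}$, gives $\sum_k\norm{y_{k+1}-z_{k+1}}^2<\infty$ and hence (ii).

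For (iii), take a cluster point $(y^*,z^*,x^*)$ along a subsequence; by (ii), $\norm{y_k-z_k}\to 0$ forces $y^*=z^*$. Passing to the limit in $y_{k+1}=D_\sigma(x_k)$ (continuity of $D_\sigma$, and $x_{k_j-1}\to x^*$ because $\norm{x_k-x_{k-1}}\to 0$) gives $y^*=D_\sigma(x^*)$, hence $\nabla\phi_\sigma(y^*)=x^*-y^*$; passing to the limit in $2y_{k+1}-x_k-z_{k+1}\in\partial(\lambda f)(z_{k+1})$ (closedness of the graph of $\partial(\lambda f)$, using $f(z_{k_j})\to f(z^*)$ obtained as usual from monotonicity of the envelope and lower semicontinuity) gives $y^*-x^*=-\nabla\phi_\sigma(y^*)\in\partial(\lambda f)(y^*)$, i.e.\ $0\in\nabla\phi_\sigma(y^*)+\partial(\lambda f)(y^*)=\partial F_{\lambda,\sigma}(y^*)$ by the $\mathcal{C}^1$ sum rule at the interior point $y^*\in\Im(D_\sigma)$, so $y^*=z^*$ is stationary for $F_{\lambda,\sigma}$. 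For (iv), I would combine the sufficient-decrease inequality with a relative-error estimate $\mathrm{dist}(0,\partial\Psi)\le b\,\norm{x_{k+1}-x_k}$ for a merit function $\Psi$ built from $F_{\lambda,\sigma}^{DR,2}$ (semi-algebraicity of $g_\sigma$ propagates through~\eqref{eq:phi} and the prox operations, and $f$ is KL, so $\Psi$ satisfies the KL property) and feed this into the abstract convergence scheme of~\cite{attouch2013convergence}: boundedness of $(y_k,z_k,x_k)$ makes the cluster set nonempty and compact, the KL inequality then applies uniformly on it, and one obtains finite length of $(x_k)$, hence of $(y_k)$ and $(z_k)$, with $y_k,z_k$ converging to a common stationary point of $F_{\lambda,\sigma}$.

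The main obstacle is exactly this restricted smoothness of $\phi_\sigma$: unlike the textbook nonconvex DRS setting where the smooth summand is $\mathcal{C}^{1,1}$ on all of $\mathbb{R}^n$, here every estimate invoking $\nabla\phi_\sigma$ or the descent lemma must be confined to $\Im(D_\sigma)$ --- which is why convexity of $\Im(D_\sigma)$ is assumed, why $L<\tfrac12$ (not merely $L<1$) is required, both for $\tfrac{L}{1-L}<1$ in the descent step and for boundedness below of the envelope, and why one must carefully check that each iterate $y_{k+1}=D_\sigma(x_k)$ and every segment used in the descent lemma stays inside $\Im(D_\sigma)$. Carrying this bookkeeping through, and re-deriving the constants of~\cite{themelis2020douglas} with effective Lipschitz constant $\tfrac{L}{1-L}$, is the part of the argument that needs the most care.
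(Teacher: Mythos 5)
Your overall architecture coincides with the paper's: the Douglas--Rachford envelope $F^{DR,2}_{\lambda,\sigma}$ as Lyapunov function, a sufficient-decrease inequality, vanishing of $\norm{y_k-z_k}$, a cluster-point analysis through the optimality conditions and the closedness of the limiting subdifferential, and the KL machinery for iterate convergence. Several of your details are correct and even more self-contained than the paper's: the identity $\nabla\phi_\sigma(y_{k+1})=x_k-y_{k+1}=\nabla g_\sigma(x_k)$, the limit argument giving $y^*=D_\sigma(x^*)$ via $\norm{x_{k+1}-x_k}=\norm{z_{k+1}-y_{k+1}}\to 0$, and the lower bound $F^{DR,2}_{\lambda,\sigma}\geq \inf g_\sigma+\lambda\inf f$ obtained from $\norm{\nabla g_\sigma(x)}^2\leq 2L\,(g_\sigma(x)-\inf g_\sigma)$ (the paper instead simply cites Theorem 3.4 of \cite{themelis2020douglas}).

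There is, however, a genuine gap in the sufficient-decrease step. The descent estimate of \cite{themelis2020douglas} (Theorem 4.1, restated as Theorem~\ref{thm:reference_DRS}(a)) requires the smooth summand $f_1=\phi_\sigma$ to be both $L_{f_1}$-smooth \emph{and} $M_{f_1}$-semiconvex, with admissible stepsize $\tau<\min\bigl(\tfrac{1}{2M_{f_1}},\tfrac{1}{L_{f_1}}\bigr)$; your sketch tracks only the Lipschitz constant $\tfrac{L}{1-L}$ and the requirement $\tfrac{L}{1-L}<1$. If one supplies the semiconvexity modulus the cheap way --- an $\tfrac{L}{1-L}$-smooth function on a convex set is $\tfrac{L}{1-L}$-semiconvex there --- the condition $1<\tfrac{1}{2M_{f_1}}=\tfrac{1-L}{2L}$ forces $L<\tfrac13$, strictly stronger than the claimed $L<\tfrac12$. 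The paper closes this by proving the sharper modulus $M_{\phi_\sigma}=\tfrac{L}{L+1}$: since $D_\sigma$ is $(1+L)$-Lipschitz, Proposition 2 of \cite{gribonval2020characterization} yields a globally $\tfrac{L}{L+1}$-semiconvex potential $\tilde\phi_\sigma$ with $D_\sigma=\prox_{\tilde\phi_\sigma}$, and Corollary 9 there identifies $\tilde\phi_\sigma$ with $\phi_\sigma$ up to an additive constant on $\Im(D_\sigma)$ --- and it is this identification (via polygonal connectedness) that actually consumes the convexity assumption on $\Im(D_\sigma)$, not only the descent-lemma segments as in your account. With $M_{\phi_\sigma}=\tfrac{L}{L+1}$ the binding constraint becomes $\tfrac{1}{L_{f_1}}=\tfrac{1-L}{L}>1$, i.e.\ $L<\tfrac12$ as stated. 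Without this ingredient your argument establishes the theorem only under the more restrictive hypothesis $L<\tfrac13$.
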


\begin{remark}
Theorem~\ref{thm:PnP-DRS2} requires $\Im(D_\sigma)$ to be convex. We underline that this assumption is difficult to verify in practice.
On the other hand, Theorem~\ref{thm:PnP-DRS2}  has two advantages with respect to  PnP-PGD in Theorem~\ref{thm:PnP-PGD} and PnP-DRS in  Theorem \ref{thm:PnP-DRS}. First, 
it encompasses nonconvex and non differentiable data-fidelity terms $f$. Next, the choice of the regularization parameter $\lambda$ is no more constrained (see the discussion at the end of Section~\ref{ssec:PGD}) and automatic parameter tuning methods~\cite{wei2020tfpnp} could be used.

\end{remark}

\section{Experiments}
\label{sec:experiments}

\subsection{Proximal GS denoiser}
\label{ssec:exp_denoiser}

In this section, we learn a gradient denoiser~\eqref{eq:GS_den} ${D_\sigma=\id -\nabla g_\sigma}$ that verifies the conditions of Proposition~\ref{prop:GSPnP_prox} and thus that can be written as a proximal mapping.
We first need to choose a relevant parametrization for the function $g_\sigma$. Several options are explored in \cite{cohen2021has}. As in \cite{hurault2022gradient}, we  design the regularization function $g_\sigma$ as
\begin{equation}
    \label{eq:g_sigma}
    g_\sigma(x) = \frac{1}{2}\norm{x - N_\sigma(x)}^2,
\end{equation}
where  $N_\sigma : \mathbb{R}^n \to \mathbb{R}^n$ is a $\mathcal{C}^2$ neural network.
This choice makes the function $g_\sigma$  bounded from below, as required in Theorems~\ref{thm:PnP-PGD},~\ref{thm:PnP-DRS} and~\ref{thm:PnP-DRS2}.
Next, it allows to take benefit of existing efficient denoising architectures.
Indeed, relation~\eqref{eq:g_sigma} leads to the following expression for the denoiser %
\begin{equation}
    \label{eq:D_param}
\begin{split}
    D_\sigma (x) &=  \id - \nabla g_{\sigma}(x) \\&= N_\sigma(x)+J_{N_\sigma(x)}^\top(x-N_\sigma(x)),
\end{split}
\end{equation}
where $J_{N_\sigma(x)}$ is the Jacobian of $N_\sigma$ at point $x$. This corresponds to applying the neural network $N_\sigma$ with an additive correction that makes the denoiser a conservative field.

\paragraph{Denoising Network Architecture}

    Similar to~\citep{hurault2022gradient}, we choose to parameterize $N_{\sigma}$ with  the architecture DRUNet~\citep{zhang2021plug} (represented in Appendix~\ref{app:architecture}), a U-Net in which residual blocks are integrated.
    DRUNet takes the noise level~$\sigma$ as input, which is consistent with our formulation.
    In order to ensure continuous differentiability w.r.t. the input, we change RELU activations to Softplus, which is $\mathcal{C}^\infty$.
    We also limit the number of residual blocks to~$2$ at each scale to lower the computational burden.

    \paragraph{Training details}

    We first train the GS denoiser, in the same conditions as~\citep{hurault2022gradient}, with the $L^2$ loss
    \begin{equation}
        \label{eq:L2_loss}
        \mathcal{L}(\sigma) = \mathbb{E}_{x\sim p, \noise_\sigma \sim \mathcal{N}(0,\sigma^2)} \left[\norm{D_\sigma (x+\noise_\sigma)-x}^2 \right], 
      \end{equation}
      where $p$ is the distribution of a database of clean images.
    However, the resulting denoiser does not verify  ${\nabla g_\sigma = \id - D_{\sigma}}$ contractive \emph{i.e.} with $L<1$ Lipschitz gradient, as required by Proposition~\ref{prop:GSPnP_prox}.
    
    The link between the Lipschitz constant of~$N_\sigma$ and the one of~$\nabla g_\sigma$ being difficult to establish, following~\citep{pesquet2021learning}, we 
    enforce $L<1$ by regularizing the training loss of $D_\sigma$ with the spectral norm of the Hessian of $g_\sigma$ that reads $\nabla^2 g_\sigma=J_{( \id -D_\sigma)}$.
    More specifically, we fine-tune the previously trained GS denoiser with the following loss:
    \begin{equation}
        \label{eq:FT_loss}
        \begin{split}
        \mathcal{L}_S &(\sigma) = \mathbb{E}_{x\sim p, \noise_\sigma \sim \mathcal{N}(0,\sigma^2)} \Big[\norm{D_\sigma
        (x+\noise_\sigma)-x}^2 \\ &+ \mu \max(\norm{J_{( \id -D_\sigma)}(x+\noise_\sigma)}_S,1-\epsilon)\Big]
        \end{split}
    \end{equation}
    where $\norm{.}_S$ denotes the spectral (or operator) norm. During training, it is estimated with $50$ power iterations.
    In practice, we fine-tune during $10$ epochs, with $\sigma$ ranging in $[0,25]$, using various penalization parameter~$\mu$ and with $\epsilon = 0.1$.

	\paragraph{Denoising results}

    We evaluate the PSNR performance of the proposed Prox-DRUNet denoiser~\eqref{eq:D_param}, trained with the loss~\eqref{eq:FT_loss} with different values of $\mu$.
    In Table~\ref{tab:denoising_results}, we compare, for various noise levels~$\sigma$, our model with DRUNet equipped with the same architecture $N_\sigma$ as our prox-DRUNet (2 residual blocks and softplus activations) and trained with an $L^2$ loss as in~\eqref{eq:L2_loss}. %
    Next, we present the results obtained with GS-DRUNet \cite{hurault2022gradient}, which corresponds to the denoiser~\eqref{eq:D_param} trained %
    with the loss~\eqref{eq:L2_loss}.
    We also indicate the performance of the classical FFDNet~\citep{zhang2018ffdnet} and DnCNN~\citep{zhang2017beyond} denoisers.

    \begin{table}[h]
        \centering\footnotesize\setlength\tabcolsep{3.pt}
        \begin{tabular}{c c c c c c }
            $\sigma (./255)$ & 5 & 10 & 15 & 20 & 25 \\
            \midrule
            FFDNet & $39.95$ & $35.81$ & $33.53$ & $31.99$ & $30.84$   \\
            DnCNN & $39.80$ & $35.82$& $33.55$ & $32.02$ & $30.87$ \\
            DRUNet & $40.19$ & $36.10$ & $33.85$ & $32.34$ & $31.21$  \\
            GS-DRUNet & $40.27$ & $36.16$& $33.92$ & $32.41$ & $31.28$   \\
            \midrule
            Prox-DRUNet ($\mu = 10^{-3}$) & $40.12$ & $35.93$ & $33.60$ & $32.01$ & $30.82$    \\
            Prox-DRUNet ($\mu = 10^{-2}$) & $40.04$ & $35.86$ & $33.51$ & $31.88$ & $30.64$   \\
        \end{tabular}
        \caption{Average denoising PSNR performance %
        of our prox-denoiser and compared methods on $256\times256$ center-cropped images from the CBSD68 dataset~\citep{MartinFTM01}, for various
        noise levels $\sigma$. }
        \label{tab:denoising_results}
    \end{table}

    As can be seen in Table~\ref{tab:denoising_results}, the Lipschitz fine-tuning step inevitably worsens the denoising performance, especially for larger values of $\mu$
    and  higher noise levels. Nevertheless for  $\mu=10^{-2}$ prox-DRUNet shows comparable performance with FFDNet and DnCNN.
    The decrease in PSNR when constraining the Lipschitz constant of $I-D_{\sigma}$ is thus limited (compared to imposing nonexpansiveness of $D_{\sigma}$, see Appendix~\ref{app:nonexpansive_denoisers}).
   
    \paragraph{Lipschitz constant}
	In our experiments, the Lipschitz constant of $g_{\sigma}=\id-D_\sigma$ is not hardly constrained to satisfy $L<1$. This property is rather softly enforced by penalization with the loss function~\eqref{eq:FT_loss}.  We now investigate the potential gap between the theoretical assumption $L<1$ and the practical considerations.
    We  evaluate in Table~\ref{tab:denoising_lip} the maximum value of  $\norm{J_{( \id -D_\sigma)}(x)}_S$ while denoising noisy images from the CBSD68 testing dataset.
    Here, the value  $\norm{J_{( \id -D_\sigma)}(x)}_S$ is computed by running the power method until convergence.
     Table~\ref{tab:denoising_lip} first illustrates that GS-DRUNet ($\mu = 0$) does not check the $L<1$ Lipschitz property, especially for very noisy images.
  Next, for a large enough penalization parameter ($\mu=10^{-2}$),  prox-DRUNet satisfies the $L<1$ constraint, on the CBSD68 testing dataset and  at all studied  noise levels.

    Tables~\ref{tab:denoising_results} and~\ref{tab:denoising_lip}  exhibit a clear trade-off, controlled by $\mu$, between denoising performance and Lipschitz constant.
    Contrary to nonexpansive denoisers (see Appendix~\ref{app:nonexpansive_denoisers}), our constrained prox denoiser provides denoising of high quality while satisfying the Lipschitz constraint.

    \begin{table}[h]
        \centering\footnotesize\setlength\tabcolsep{2pt}
        \begin{tabular}{c c c c c c c c }
            $\sigma (./255)$ & 0 & 5 & 10 & 15 & 20 & 25 \\
            \midrule
            GS-DRUNet ($\mu = 0$)& $0.94$ & $1.26$ & $2.47$ & $1.96$ & $2.50$ & $3.27$ \\
            Prox-DRUNet ($\mu = 10^{-2}$) & $0.87$ & $0.92$ & $0.95$ & $0.99$ & $0.96$ & $0.96$\\
            Prox-DRUNet ($\mu = 10^{-3}$) & $0.86$ & $0.94$ & $0.97$ & $0.98$ & $0.99$ & $1.19$ \\
        \end{tabular}
        \caption{Maximal value of $\norm{J_{( \id -D_\sigma})(x)}_S$ obtained with proximal denoisers~\eqref{eq:D_param} on $256\times256$ center-cropped CBSD68 dataset, for various
        noise levels $\sigma$.}
        \label{tab:denoising_lip}
    \end{table}

    We precise here that after each PnP experiment conducted with Prox-DRUNet in the following section, we empirically verified that we still have $\norm{\nabla^2 g_\sigma(x_k)}_S < 1$ on all the iterates $x_k$ where $D_{\sigma}$ was evaluated.

\subsection{PnP restoration}
\label{ssec:exp_restoration}
In this section, we apply, with the proximal denoiser Prox-DRUNET, the PnP algorithms PnP-PGD~\eqref{eq:PnP-PGD}, PnP-DRSdiff~\eqref{eq:PnP-DRS2} (\emph{diff} specifies that this PnP-DRS is dedicated to differentiable data-fidelity terms $f$) and PnP-DRS~\eqref{eq:PnP-DRS3} on deblurring super-resolution and inpainting experiments.
We seek an estimate $x$ of a clean image $x^* \in \mathbb{R}^\dime$, \hbox{from a} degraded obser\-vation
 obtained as
     $ y = A x^* + \noise_\nu\in \mathbb{R}^m$,
with $A$ a $m\times\dime$ degradation matrix and $\noise_\nu$ a \hbox{white Gaussian} noise with zero mean and standard deviation $\nu$. With this formulation the data-fidelity term
takes the form $f(x)=\frac{1}{2\nu^2}\norm{Ax-y}^2$ and the Lipschitz constant of $\nabla f$ is $L_f = \frac{1}{\nu^2}\norm{A^T A}_S$.
Convergence of PnP-PGD \eqref{eq:PnP-PGD} and PnP-DRSdiff~\eqref{eq:PnP-DRS2} are guaranteed by Theorems~\ref{thm:PnP-PGD} and~\ref{thm:PnP-DRS} for $\reglambda L_f<1$ and, with Theorem~\ref{thm:PnP-DRS2}, PnP-DRS~\eqref{eq:PnP-DRS2} converges without condition on the value of $\lambda$. 
  
	We will use for evaluation Gaussian noise with $3$ noise levels $\nu \in\{2.55,7.65,12.75\}/255$ \emph{i.e.} $\nu \in\{0.01,0.03,0.05\}$.
    For each noise level, we propose default values for the parameters $\sigma$ and $\lambda$ that we keep for both deblurring and super-resolution.
    These values are explicitly given in Appendix~\ref{app:parameters}. Note that for both PnP-PGD and PnP-DRSdiff,
    $\lambda$ is set to its maximal possible value for theoretical convergence and $\sigma$ is adjusted to the same value for both algorithms.
    Therefore PnP-PGD and PnP-DRSdiff target a stationary point of the same objective function $F_{\lambda,\sigma}$.
    The algorithm terminates when the relative difference between consecutive values of the objective function is less than $\epsilon = 10^{-8}$ or the number of iterations exceeds $K=1000$.

    The convergence Theorem~\ref{thm:PnP-DRS2} of Prox-PnP-DRS requires $L < 1/2$. As $D_{\sigma}$ is trained to ensure $L < 1$,
    as suggested in Remark~\ref{rem:relaxation}, we relax the denoising operation replacing $D_\sigma$ by ${D_\sigma^\alpha = \alpha D_\sigma + (1-\alpha)\id = \id - \alpha \nabla g_\sigma}$, with $\alpha = 1/2$.
    To illustrate the relavance of PnP-DRS, we also provide in Appendix~\ref{app:inpainting}, inpaiting experiments which involve a non differentiable data-fidelity term.

\subsubsection{Deblurring}
\label{sssec:deblurring}

   For image deblurring, the degradation operator $A=H$ is a convolution performed with circular boundary conditions.
    The proximal operator of $f$ can be effciently calculated using the discrete Fourier transform.
    
    We demonstrate the effectiveness of our method on a large variety of blur kernels (represented in Appendix~\ref{app:kernels}) and noise levels. As in~\citep{zhang2017learning, pesquet2021learning,zhang2021plug, hurault2022gradient}, we use the 8
    real-world camera shake kernels of~\citet{levin2009understanding} as well as the $9 \times9$ uniform kernel and the $25 \times 25$ Gaussian kernel with standard deviation $1.6$
    (as in~\citep{romano2017little}).
    The algorithms are initialized with the observed blurred image and robustness to initialization is discussed in Appendix~\ref{app:init}.

    Figure~\ref{fig:deblurring1} first illustrates, on the image `starfish', that the three methods give consistent deblurring results with sharp edges.
    The convergence of the curves $\min_{0 \leq i \leq k}\norm{x_{i+1}-x_i}^2$ (l) and $F_{\lambda,\sigma}(x_k)$ (i,j,k) empirically confirm the theoretical convergence results. Additional illustrations are provided in Appendix \ref{app:add_experiments}.

    Next we numerically evaluate in Table~\ref{tab:deblurring_results} the PSNR performance of our three PnP algorithms on CBSD68. We give comparisons with the deep state-of-the-art PnP methods IRCNN~\citep{zhang2017learning} and DPIR~\citep{zhang2021plug} which both apply
    the PnP-HQS algorithm with decreasing stepsize but without convergence guarantees. We also provide comparisons  with the GS-PnP-HQS method \citep{hurault2022gradient} which
    corresponds to PnP-HQS with the denoiser GS-DRUNet from Table~\ref{tab:denoising_results}. We finally indicate the deblurring performance of
    \emph{nonexp-PnP-PGD}, the PnP-PGD algorithm applied with the denoiser \emph{nonexp-DRUNet} trained to be nonexpansive (see Appendix~\ref{app:nonexpansive_denoisers}).

    Observe that, among Prox-PnP methods, Prox-PnP-DRS gives the best results. Compared to the two other algorithms, convergence is guaranteed whatever be the value of~$\lambda$, which can thus be tuned to optimize performance. Prox-PnP-DRS then shows very competitive performance with respect to the state-of-the-art deep PnP methods IRCNN and DPIR.
    Moreover, contrary to the proposed algorithms, IRCNN and DPIR do not have any guarantee of convergence and we empirically show in Appendix~\ref{app:DPIR} that DPIR actually fails to converge in practice.  
    Note also that, Prox-PnP-PGD and Prox-PnP-DRS, which share the same regularization parameter $\lambda$ in the objective function, almost always converge towards the same local minima, with very similar convergence curves (see Figure~\ref{fig:deblurring1}). %

    \begin{figure*}[ht] \centering
    \captionsetup[subfigure]{justification=centering}
    \begin{subfigure}[b]{.17\linewidth}
            \centering
            \begin{tikzpicture}[spy using outlines={rectangle,blue,magnification=5,size=1.2cm, connect spies}]
            \node {\includegraphics[height=2.5cm]{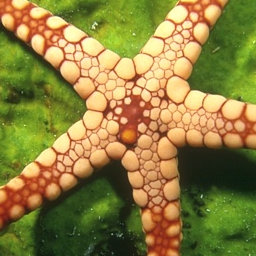}};
            \spy on (0.25,-0.4) in node [left] at  (1.25,.62);
                \node at (-0.89,-0.89) {\includegraphics[scale=1.2]{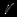}};
            \end{tikzpicture}
            \caption{Clean \\ ~}
        \end{subfigure}
    \begin{subfigure}[b]{.17\linewidth}
            \centering
            \begin{tikzpicture}[spy using outlines={rectangle,blue,magnification=5,size=1.2cm, connect spies}]
            \node {\includegraphics[height=2.5cm]{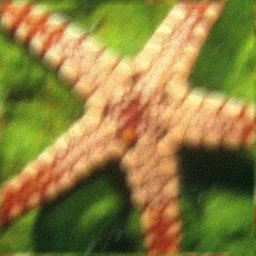}};
            \spy on (0.25,-0.4) in node [left] at  (1.25,.62);
            \end{tikzpicture}
            \caption{Observed \\ ~}
        \end{subfigure}
   \begin{subfigure}[b]{.17\linewidth}
           \centering
           \begin{tikzpicture}[spy using outlines={rectangle,blue,magnification=5,size=1.2cm, connect spies}]
           \node {\includegraphics[height=2.5cm]{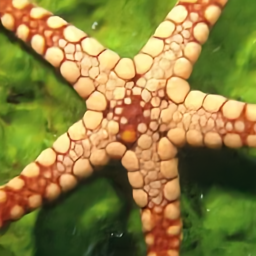}};
           \spy on (0.25,-0.4) in node [left] at  (1.25,.62);
           \end{tikzpicture}
           \caption{IRCNN \\ ($28.66$dB)}
       \end{subfigure}
   \begin{subfigure}[b]{.17\linewidth}
           \centering
           \begin{tikzpicture}[spy using outlines={rectangle,blue,magnification=5,size=1.2cm, connect spies}]
           \node {\includegraphics[height=2.5cm]{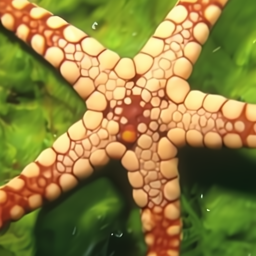}};
           \spy on (0.25,-0.4) in node [left] at  (1.25,.62);
           \end{tikzpicture}
           \caption{DPIR \\($29.76$dB)}
       \end{subfigure}
   \begin{subfigure}[b]{.17\linewidth}
           \centering
           \begin{tikzpicture}[spy using outlines={rectangle,blue,magnification=5,size=1.2cm, connect spies}]
           \node {\includegraphics[height=2.5cm]{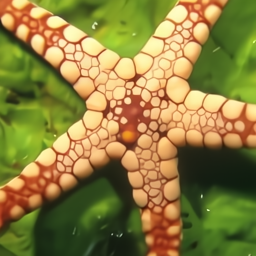}};
           \spy on (0.25,-0.4) in node [left] at (1.25,.62);
           \end{tikzpicture}
           \caption{GSPnP-HQS \\ ($29.90$dB)}
       \end{subfigure} 
    \begin{minipage}{0.52\linewidth}
    \begin{subfigure}[b]{.32\linewidth}
            \centering
            \begin{tikzpicture}[spy using outlines={rectangle,blue,magnification=5,size=1.2cm, connect spies}]
            \node {\includegraphics[width=2.5cm]{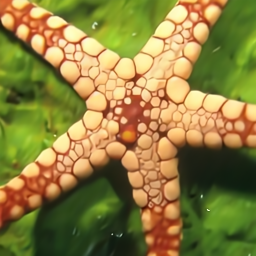}};
            \spy on (0.25,-0.4) in node [left] at (1.25,.62);
            \end{tikzpicture}
            \caption{Prox-PnP-PGD ($29.41$dB) }
        \end{subfigure}
    \begin{subfigure}[b]{.32\linewidth}
            \centering
            \begin{tikzpicture}[spy using outlines={rectangle,blue,magnification=5,size=1.2cm, connect spies}]
            \node {\includegraphics[width=2.5cm]{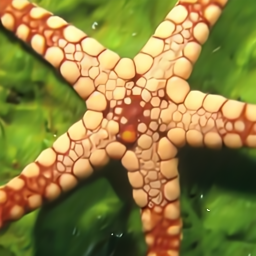}};
            \spy on (0.25,-0.4) in node [left] at (1.25,.62);
            \end{tikzpicture}
            \caption{Prox-PnP-DRSdiff ($29.41$dB)}
        \end{subfigure}
    \begin{subfigure}[b]{.32\linewidth}
            \centering
            \begin{tikzpicture}[spy using outlines={rectangle,blue,magnification=5,size=1.2cm, connect spies}]
            \node {\includegraphics[width=2.5cm]{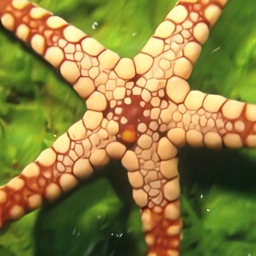}};
            \spy on (0.25,-0.4) in node [left] at (1.25,.62);
            \end{tikzpicture}
            \caption{Prox-PnP-DRS ($29.65$dB)} 
        \end{subfigure} 
   \begin{subfigure}[b]{.32\linewidth}
            \centering
            \includegraphics[width=3cm]{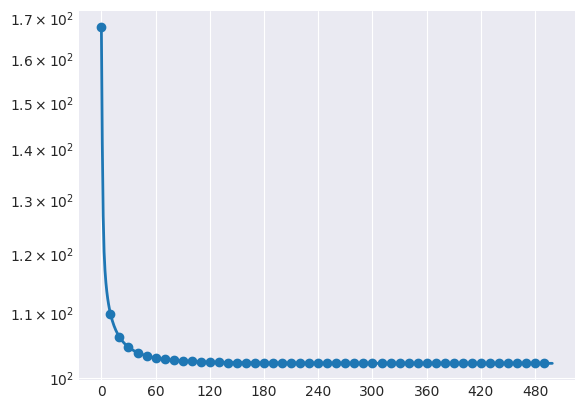}
            \caption{$F_{\lambda,\sigma}(x_k)$ \\ PnP-PGD}
        \end{subfigure}
        \begin{subfigure}[b]{.32\linewidth}
            \centering
            \includegraphics[width=3cm]{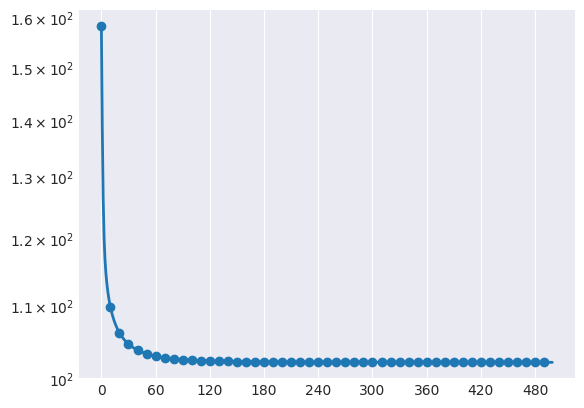}
            \caption{$F_{\lambda,\sigma}(x_k)$ \\ PnP-DRSdiff}
        \end{subfigure}
        \begin{subfigure}[b]{.32\linewidth}
            \centering
            \includegraphics[width=3cm]{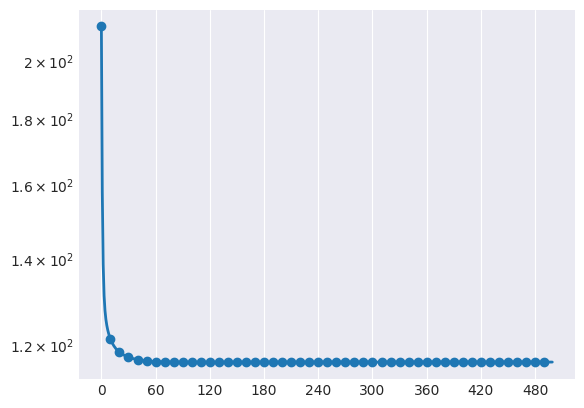}
            \caption{$F_{\lambda,\sigma}(x_k)$ \\ PnP-DRS}
        \end{subfigure}
    \end{minipage}
    \begin{minipage}{0.35\linewidth}
        \begin{subfigure}[b]{\linewidth}
            \centering
            \includegraphics[height=3.5cm]{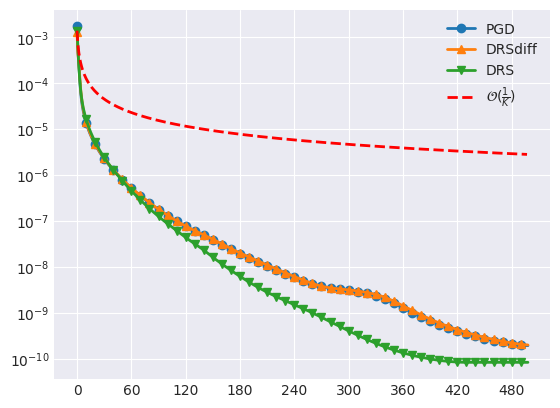}
            \caption{\footnotesize $\min_{i \leq k}\norm{x_{i+1}-x_i}^2$}
        \end{subfigure}
    \end{minipage}
    \caption{Deblurring of ``starfish" degraded with the indicated blur kernel and input noise level $\nu=0.03$.}
    \label{fig:deblurring1}
    \end{figure*}

\begin{table}[h]
    \centering\footnotesize\setlength\tabcolsep{3pt}
    \begin{tabular}{c c c c c c }
        Method & 2.55 & 7.65 & 12.75 \\
        \midrule
        IRCNN & $31.42$ & $28.01$ & $26.40$ \\
        DPIR & $31.93$ & $28.30$ & $26.82$ \\
        GS-PnP-HQS & $31.70$ & $28.28$ & $26.86$ \\
        \midrule
        Prox-PnP-PGD & $30.57$ & $27.80$ & $26.61$ \\
        Prox-PnP-DRSdiff & $30.57$ & $27.78$ & $26.61$\\
        Prox-PnP-DRS & $31.54$ & $28.07$ & $26.60$ \\
        \midrule
        nonexp-PnP-PGD & $30.25$ & $27.06$ & $25.30$ \\
    \end{tabular}
    \caption{PSNR (dB) of  deblurring methods on CBSD68. PSNR are averaged over $10$ blur kernels for various noise levels $\nu$.
    {\emph{Prox-PnP}} stands for the PnP algorithm applied with the proximal denoiser. }
    \label{tab:deblurring_results}
    \end{table}

\subsubsection{Super-resolution}
\label{sssec:super-resolution}

    \begin{figure*}[!ht] \centering
    \captionsetup[subfigure]{justification=centering}
    \begin{subfigure}[b]{.17\linewidth}
            \centering
            \begin{tikzpicture}[spy using outlines={rectangle,blue,magnification=4,size=1.2cm, connect spies}]
            \node {\includegraphics[height=2.5cm]{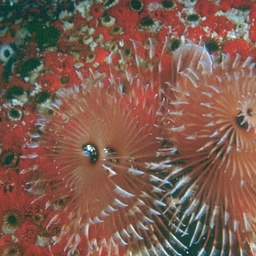}};
            \spy on (-0.6,-0.5) in node [left] at  (1.25,.62);
                \node at (-0.89,0.89) {\includegraphics[scale=0.8]{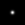}};
            \end{tikzpicture}
            \caption{Clean\\ ~}
        \end{subfigure}
    \begin{subfigure}[b]{.17\linewidth}
            \centering
            \begin{tikzpicture}[spy using outlines={rectangle,blue,magnification=4,size=.6cm, connect spies}]
            \node {\includegraphics[height=1.25cm]{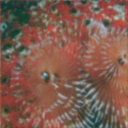}};
            \spy on (-0.3,-0.25) in node [left] at (1.25,.62);
            \end{tikzpicture}
            \caption{Observed \\ ~}
        \end{subfigure}
   \begin{subfigure}[b]{.17\linewidth}
           \centering
           \begin{tikzpicture}[spy using outlines={rectangle,blue,magnification=4,size=1.2cm, connect spies}]
           \node {\includegraphics[height=2.5cm]{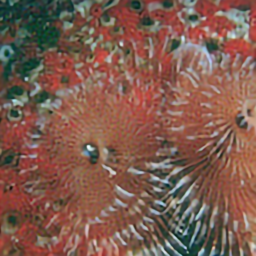}};
           \spy on  (-0.6,-0.5) in node [left] at (1.25,.62);
           \end{tikzpicture}
           \caption{IRCNN \\($27.30$dB)}
       \end{subfigure}
   \begin{subfigure}[b]{.17\linewidth}
           \centering
           \begin{tikzpicture}[spy using outlines={rectangle,blue,magnification=4,size=1.2cm, connect spies}]
           \node {\includegraphics[height=2.5cm]{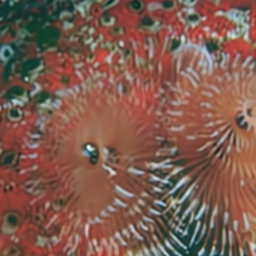}};
           \spy on  (-0.6,-0.5) in node [left] at (1.25,.62);
           \end{tikzpicture}
           \caption{DPIR \\($28.04$dB)}
       \end{subfigure}
   \begin{subfigure}[b]{.17\linewidth}
           \centering
           \begin{tikzpicture}[spy using outlines={rectangle,blue,magnification=4,size=1.2cm, connect spies}]
           \node {\includegraphics[height=2.5cm]{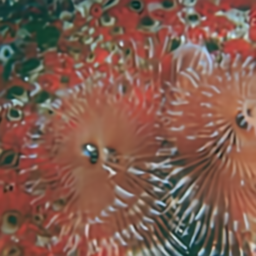}};
           \spy on  (-0.6,-0.5) in node [left] at (1.25,.62);
           \end{tikzpicture}
           \caption{GSPnP-HQS \\ ($28.20$dB)}
       \end{subfigure}
    \begin{minipage}{0.52\linewidth}
    \begin{subfigure}[b]{.32\linewidth}
            \centering
            \begin{tikzpicture}[spy using outlines={rectangle,blue,magnification=4,size=1.2cm, connect spies}]
            \node {\includegraphics[width=2.5cm]{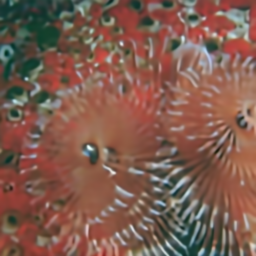}};
            \spy on (-0.6,-0.5) in node [left] at (1.25,.62);
            \end{tikzpicture}
            \caption{Prox-PnP-PGD \\ ($27.27$dB)}
        \end{subfigure}
    \begin{subfigure}[b]{.32\linewidth}
            \centering
            \begin{tikzpicture}[spy using outlines={rectangle,blue,magnification=4,size=1.2cm, connect spies}]
            \node {\includegraphics[width=2.5cm]{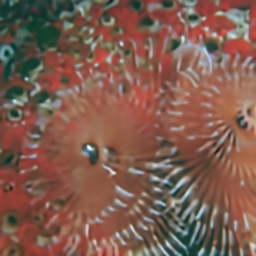}};
            \spy on (-0.6,-0.5) in node [left] at (1.25,.62);
            \end{tikzpicture}
            \caption{Prox-PnP-DRSdiff \\ ($27.27$dB)}
        \end{subfigure}
    \begin{subfigure}[b]{.32\linewidth}
            \centering
            \begin{tikzpicture}[spy using outlines={rectangle,blue,magnification=4,size=1.2cm, connect spies}]
            \node {\includegraphics[width=2.5cm]{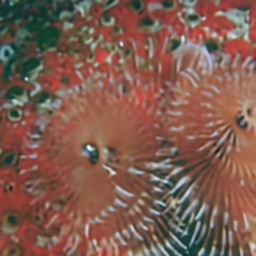}};
            \spy on (-0.6,-0.5) in node [left] at (1.25,.62);
            \end{tikzpicture}
            \caption{Prox-PnP-DRS \\ ($27.95$dB)}
        \end{subfigure} 
    \begin{subfigure}[b]{.32\linewidth}
        \centering
        \includegraphics[width=3cm]{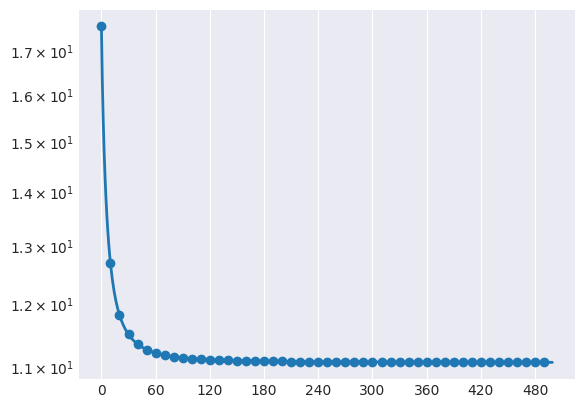}
        \caption{$F_{\lambda,\sigma}(x_k)$ \\ PnP-PGD}
    \end{subfigure}
    \begin{subfigure}[b]{.32\linewidth}
        \centering
        \includegraphics[width=3cm]{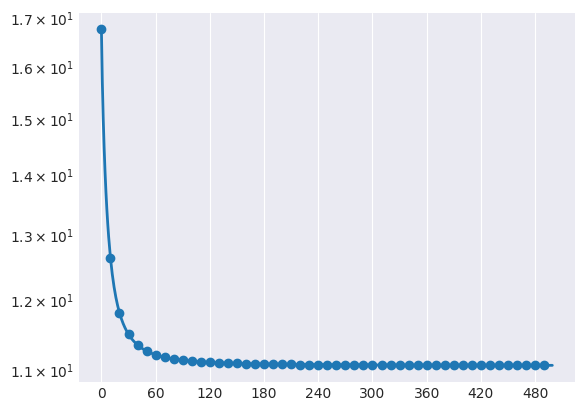}
        \caption{$F_{\lambda,\sigma}(x_k)$ \\ PnP-DRSdiff}
    \end{subfigure}
    \begin{subfigure}[b]{.32\linewidth}
        \centering
        \includegraphics[width=3cm]{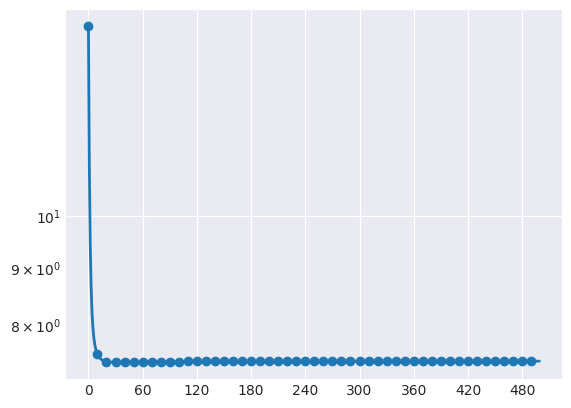}
        \caption{$F_{\lambda,\sigma}(x_k)$ \\ PnP-DRS}
    \end{subfigure}
    \end{minipage}
    \begin{minipage}{0.35\linewidth}
    \begin{subfigure}[b]{\linewidth}
        \centering
        \includegraphics[height=3.5cm]{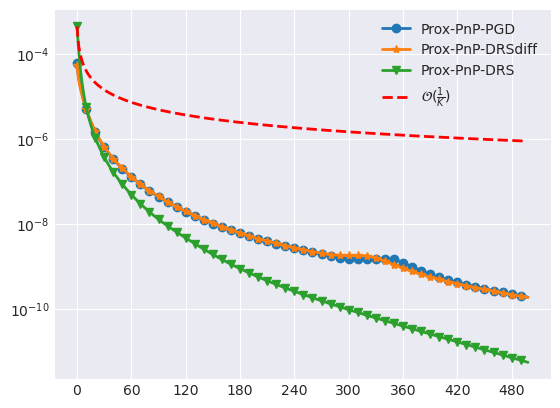}
        \caption{\footnotesize $\min_{i \leq k}\norm{x_{i+1}-x_i}^2$}
    \end{subfigure}
    \end{minipage}
    \caption{Super-resolution on an image from CBSD68 downscaled with the indicated blur kernel, scale $2$ and input noise level $\nu=0.01$.}
    \label{fig:SR1}
    \end{figure*}

\begin{table}[b]
    \centering\footnotesize\setlength\tabcolsep{3pt}%
    \begin{tabular}{c c c c c c c  }
        Method & \multicolumn{3}{c}{$s = 2$} & \multicolumn{3}{c}{$s = 3$} \\
        \cmidrule(lr){2-4} \cmidrule(lr){5-7}
         & 2.55 & 7.65 & 12.75 & 2.55 & 7.65 & 12.75 \\
        \midrule
        IRCNN & $26.97$ & $25.86$ & $25.45$ & $ 25.60$ & $ 24.72$ & $24.38$\\
        DPIR &   $27.79$ & $26.58$ & $25.83$ & $26.05$ & $25.27$ & $24.66$\\
        GS-PnP &  $27.88$ & $26.81$ & $ 26.01$ & $25.97$ & $25.35$ & $ 24.74$\\
        \midrule
        Prox-PnP-PGD & $27.44$ & $26.57$ & $25.82$ & $25.75$ & $25.20$ & $24.63$  \\
        Prox-PnP-DRSdiff & $27.44$ &  $26.58$ & $25.82$ & $25.75$ & $25.19$ & $24.63$ \\
        Prox-PnP-DRS & $27.93$ & $26.61$ & $25.79$ & $26.13$ & $25.29$ & $24.67$\\
        \midrule
        nonexp-PnP-PGD & $27.13$ & $26.20$ & $25.40$ & $23.83$ & $24.57$ & $24.01$  \\
    \end{tabular}
    \caption{PSNR (dB) of super-resolution methods on CBSD68. PSNR  averaged over $4$ blur kernels for various scales $s$ and noise levels $\nu$.}
    \label{tab:SR_results}
    \end{table}

   For single image super-resolution (SR), the
    low-resolution image $y \in \mathbb{R}^m$ is obtained from the high-resolution one $x \in \mathbb{R}^n$ via $y = SHx + \noise_\nu$ where
    $H \in \mathbb{R}^{n \times n}$ is the convolution with anti-aliasing kernel. The matrix $S$ is the standard s-fold downsampling matrix of size $m\times \dime$ and $\dime = s^2 \times m$.
   An efficient closed-form calculation of the proximal map for the data-fidelity term $f(x) = \frac{1}{2\nu^2}\norm{SHx-y}^2$ is given by~\citet{zhao2016fast}.
   
    As in~\citet{zhang2021plug}, we evaluate SR performance on 4 isotropic Gaussian blur kernels
    with different standard deviations ($0.7$, $1.2$, $1.6$ and~$2.0$) represented in Appendix~\ref{app:kernels}.
    We consider downsampled images at scale $s=2$ and $s=3$.
    On a SR example (Figure~\ref{fig:SR1}), we observe the convergence of the iterates and of the function values for all three Prox-PnP algorithms.
    The methods are  numerically compared in Table~\ref{tab:SR_results} against IRCNN, DPIR, GS-PnP and nonexp-PnP-PGD.
    Observe that, despite being trained to  guarantee convergence, the three algorithms reach the performance of DPIR and that Prox-PnP-DRS realizes, with GS-PnP, the best super-resolution performance across the variety of scales and noise levels.
    Finally note that, as for deblurring, the PnP performance is significantly reduced when  plugging a nonexpansive denoiser rather than a nonexpansive residual.%

\section{Conclusion}
In this paper we provide new convergence results for PnP schemes able to produce state-of-the-art results for image restoration.
We propose to learn a denoiser $D_\sigma$ as the proximal operator of a nonconvex regularization function.
Our proximal denoiser is not limited by nonexpansiveness and competes with unconstrained state-of-the-art denoisers.
We show that, by plugging this denoiser, PnP-PGD, PnP-ADMM and PnP-DRS algorithms are guaranteed to converge to stationary points of an explicit functional. 
PnP-PGD enables to treat data-fidelity terms for which the proximal mapping cannot be computed in closed-form.
The convergence studies for PnP-PGD/DRSdiff impose a constraint on the regularization parameter, on the contrary to PnP-DRS which thus yields better performance, at the cost of assuming convexity of $\Im(D_\sigma)$. 
Extensive quantitative experiments on ill-posed IR tasks, including data-fidelity terms that are not strongly convex, confirm the relevance of this approach, and show that it allows to precisely monitor convergence of the numerical schemes.
To better understand the applicability of theoretical results, it is important to understand the geometry of the denoiser image $\Im(D_\sigma)$, a key issue that remains to be investigated.

\section*{Acknowledgements}
This work was funded by the French ministry of research through a CDSN grant of ENS Paris-Saclay. This study has also been carried out with financial support from the French Research Agency through the PostProdLEAP and Mistic projects (ANR-19-CE23-0027-01 and ANR-19-CE40-005).

\bibliography{refs.bib}
\bibliographystyle{icml2022}
\newpage
\appendix
\onecolumn

\section{Proof of Proposition~\ref{prop:GSPnP_prox}}
\label{app:proof_prop}

Our proof uses results from~\cite{gribonval2020characterization} and borrows proofs from~\cite{gribonval2011should}. The latter shows a similar result with $D_\sigma=D_{mmse}$ the MMSE Gaussian noise denoiser.
With Tweedie's formula, $D_{mmse} = \id - \nabla g_{mmse}$ with $g_{mmse} = -\sigma^2 \log p * \mathcal{N}(0,\sigma^2 \id)$ the log-probability distribution of the noisy observation, and $g_{mmse} \in \mathcal{C}^{\infty}$.
As $g_\sigma$ does not write as $-\sigma^2 \log p * \mathcal{N}(0,\sigma^2 \id)$ for a prior $p$, the GS denoiser $D_\sigma = \id - \nabla g_\sigma$ 
does not correspond to Tweedie's formula and is not a MMSE.
The real MMSE estimator $D_\sigma^*$ cannot be computed in closed-form and $D_\sigma$ only approximates $D_\sigma^*$ via MSE minimization.
Then, the fact that $D_\sigma^*$ is a prox \cite{gribonval2011should} does not directly extend to $D_\sigma$. We show here that we can still express $D_\sigma$ as the proximal mapping of an explicit \emph{nonconvex} function. 

\begin{proof}

    \begin{itemize}

    \item[(i)] We first recall that in our setting (relations~\eqref{def:gs},~\eqref{eq:GS_den} and~\eqref{eq:pot_h}) , $D_\sigma=\id-\nabla g_\sigma$ can be written as $D_\sigma=\nabla h_\sigma$.
    We  show that if $\nabla g_\sigma = \id-\nabla h_\sigma$ is $L$-Lipschitz with $L<1$, then  $h_\sigma $ is $(1-L)$-strongly convex.
    For $x,y \in \mathcal{X}$, we have
    \begin{equation}
    \begin{split}
        \langle \nabla h_\sigma(x)- \nabla h_\sigma (y),x-y \rangle &= \norm{x-y}^2 - \langle \nabla g_\sigma(x)- \nabla g_\sigma(y),x-y \rangle  \\
        &\geq \norm{x-y}^2 -  \norm{\nabla g_\sigma(x)-\nabla g_\sigma(y)}\norm{x-y} \\
        &\geq \norm{x-y}^2 - L \norm{x-y}^2 \\
        &= (1-L) \norm{x-y}^2,
    \end{split}
    \end{equation}
    where the two inequalities respectively follow from Cauchy-Schwarz and from the Lipschitz continuity of $\nabla g_\sigma$.

    As $h_\sigma$ is $\mathcal{C}^2$ and strongly convex, it follows that $\forall x \in \mathcal{X}$, the Jacobian $J_{D_\sigma}(x) = \nabla^2 h_\sigma(x)$ is positive definite.

    \item[(ii)] Let us first show that $D_\sigma$ is injective.
    Assume by contradiction that $\exists x,x' \in \mathcal{X}$ such that $x \neq x'$ and $D_\sigma(x)=D_\sigma(x')$. Let $v = (x'-x) / \norm{x'-x}$. As $\mathcal{X}$ is convex, $\forall t \in [0,\norm{x'-x}]$, $x+tv \in \mathcal{X}$. The function $r : t \to \langle v, D_\sigma(x+tv) \rangle$
    is  $\mathcal{C}^{1}$ and satisfies $r(0) = r(\norm{x'-x})$. By Rolle's theorem, $\exists t_0 \in (0,\norm{x'-x})$ such that $r'(t_0) = \langle v, J_{D_\sigma}(x+t_0v)^Tv \rangle = 0$,
    which is impossible because $J_{D_\sigma}(x+t_0v)$ is positive definite. Therefore $D_\sigma$ is injective and we can consider its inverse $D_\sigma^{-1}$ on $\Im(D_\sigma)$.
    Also, the inverse function theorem ensures that $\Im(D_\sigma)$ is open in $\mathbb{R}^n$.

    Let $\phi_\sigma$ defined as
    \begin{equation}\label{redef:phi}
         \phi_\sigma(x) := \left\{\begin{array}{l} -\frac{1}{2} \norm{{D_\sigma}^{-1}(x)-x}^2 + g_\sigma({D_\sigma}^{-1}(x)))  \ \ \  \text{if }\ x \in \Im(D_\sigma), \\
             +\infty \ \ \ \text{otherwise}\end{array}\right.
        \end{equation}
    We now show that we also have $D_\sigma=\prox_{\phi_\sigma}$.

    For $y \in \mathcal{X}$, we thus look for the minimum of
     \begin{equation}
         \theta(x) =  \phi_\sigma(x) + \frac{1}{2}\norm{y-x}^2.
     \end{equation}

    First, the definition of $ \phi_\sigma$ ensures that $\prox_{\phi_\sigma}$ takes its values in $\Im(D_\sigma)$. We let $x = D_\sigma(u)$ for $u\in \mathcal{X}$ and define

    \begin{align}
    \begin{split}
        \Psi(u) = \theta(D_\sigma(u)) &=  \phi_\sigma(D_\sigma(u)) + \frac{1}{2}\norm{y-D_\sigma(u)}^2 \\
        &= -\frac{1}{2}\norm{u-D_\sigma(u)}^2 + g_\sigma(u) + \frac{1}{2}\norm{y-D_\sigma(u)}^2 \\
        &= -\frac{1}{2}\norm{\nabla g_\sigma(u)}^2 + g_\sigma(u) + \frac{1}{2}\norm{y-D_\sigma(u)}^2.
    \end{split}
    \end{align}
    The function $\Psi$ is $\mathcal{C}^1$ in $\mathcal{X}$. As $D_\sigma(u) = u - \nabla g_\sigma(u)$, we get $J_{D_\sigma}(u) = \id -\nabla^2 g_\sigma(u)$ (which is symmetric) and
    \begin{align}
    \begin{split}
        \nabla \Psi(u) &= -\nabla^2 g_\sigma(u). \nabla g_\sigma(u) + \nabla g_\sigma(u) - J_{D_\sigma}(u).(y-D_\sigma(u)) \\
        &= J_{D_\sigma}(u).(\nabla g_\sigma(u) - y + D_\sigma(u)) \\
        &= J_{D_\sigma}(u).(u-y).
    \end{split}
    \end{align}

    Let $r(t) :=  \Psi(y+t(u-y))$ for $t \in [0,1]$. Then,
    \begin{align}
    \begin{split}
        r'(t) &= \langle \nabla \Psi(y+t(u-y)),u-y \rangle \\
        &= t \langle J_{D_\sigma}(y+t(u-y)).(u-y),u-y \rangle, \\
    \end{split}
    \end{align}
    which is $0$ if and only if $t=0$ because $J_{D_\sigma}(y+t(u-y))$ is positive definite. Then $r$ admits a unique stationary point at $t=0$.
    This is a global minimum because $r'(t)$ has the sign of $t$.
    Therefore, $\Psi$ admits a unique stationary point at $y+0.(u-y) = y$, which is a global minimum.
    By the definition of $ \phi_\sigma$, we can deduce that $\forall y$, $x \to \phi_\sigma(x) + \frac{1}{2}\norm{y-x}^2$ admits a unique
    global minimum at $x=D_\sigma(y)$ \emph{i.e.} $D_\sigma = \prox_{ \phi_\sigma}$.

    \item[(iii)]
   For any $x \in \mathcal{X}$ we have
    \begin{equation}
        \begin{split}
            \phi_\sigma(x) &= \frac{1}{2}\norm{x-x}^2 + \phi_\sigma(x) \\
            &\geq \frac{1}{2}\norm{x-D_\sigma(x)}^2 + \phi_\sigma(D_\sigma(x)) \\
            &= g_\sigma(x) 
        \end{split}
    \end{equation}
    where the first inequality comes from the definition of the proximal operator  $D_\sigma = \prox_{\phi_\sigma}$ and the last equality is given
    by the definition~\eqref{redef:phi} of $ \phi_\sigma$.

    \item[(iv)] By the inverse function theorem, $\phi_\sigma$ is immediately $\mathcal{C}^k$ ($k \geq 1$) on $\Im(D_\sigma)$. We recall that $D_\sigma=\id-\nabla g_\sigma$. From the definition of the proximal operator of the 
    differentiable function $\phi_\sigma$, that  is single-valued and satisfies $D_\sigma=\prox_{\phi_\sigma}$ (point(ii)), we have $\forall y \in \Im(D_\sigma)$
    \begin{equation}
            D_\sigma(y) = (\id + \nabla \phi_\sigma)^{-1} (y) \quad \text{i.e.} \quad
             \nabla \phi_\sigma(y) =D_\sigma^{-1}(y) -y = \nabla g_\sigma(D_\sigma^{-1}(y)).
    \end{equation}

    \item[(v)]
We recall again that the denoiser is defined in~\eqref{eq:GS_den} as $D_\sigma=\nabla h_\sigma$, with the function  ${h_\sigma : x \to \frac{1}{2}\norm{x}^2 - g_\sigma(x)}$.
    Let $x,y \in \Im(D_\sigma)$, there exists $u,v \in \mathbb{R}^n$ such that $x=D_\sigma(u)$ and $y = D_\sigma(v)$. Hence we have
    \begin{align*}
        \norm{\nabla \phi_\sigma(x)- \nabla \phi_\sigma(y)}
        &= \norm{{D_\sigma}^{-1}(x)-{D_\sigma}^{-1}(y) - (x-y)}  \\
        &= \norm{u-D_\sigma(u) - (v-D_\sigma(v))}  \\
        &= \norm{\nabla g_\sigma(u) - \nabla g_\sigma(v)}  \\
        &\leq L \norm{u-v} \\
        &\leq \frac{L}{1-L} \norm{\nabla h_\sigma(u)- \nabla h_\sigma(v)} \\
        &= \frac{L}{1-L} \norm{D_\sigma(u)- D_\sigma(v)} \\
        &= \frac{L}{1-L} \norm{x-y},
    \end{align*}
where the first and second inequalities respectively follow from the facts that $\nabla g_\sigma$ is $L$-Lipschitz and $h_\sigma$ is $(1-L)$-strongly convex (from point (i)).

 \end{itemize}
    \end{proof}

\section{Proof of Theorem~\ref{thm:PnP-PGD}}
\label{app:proof_thm_PGD}

To prove this convergence theorem, we use the literature of convergence analysis~\cite{attouch2013convergence, beck2017first,li2015accelerated} of the PGD algorithm in the nonconvex setting.
The two first points follow exactly the same arguments as~\cite{beck2017first,hurault2022gradient}.
The third point requires an adaptation to handle the nonconvexity of $\phi_\sigma$.
Finally, the last point of the theorem directly follows from~\cite{attouch2013convergence}.

    \begin{proof}
        \item[(i)]
            We denote the proximal gradient fixed point operator ${T_{\lambda,\sigma} = \prox_{\phi_\sigma} \circ  (\id - \reglambda \nabla f)}$, the objective function
            $F_{\lambda,\sigma} = \reglambda f + \phi_\sigma$ and we introduce
            \begin{equation}\label{def:Q}
                Q(x,y) =\reglambda  f(y) +\reglambda  \langle x-y, \nabla f(y) \rangle + \frac{1}{2}\norm{x-y}^2 + \phi_\sigma(x).
            \end{equation}
            We have
                   \begin{equation}\label{relation1}
                Q(x,x) =  F_{\lambda,\sigma}(x)%
            \end{equation}
            and
            \begin{equation}
                \begin{split}
                  \argmin_x Q(x,y) &= \argmin_x \reglambda \langle x-y, \nabla f(y) \rangle + \frac{1}{2}\norm{x-y}^2 +\phi_\sigma(x) \\
                    &= \argmin_x\phi_\sigma(x) + \frac{1}{2}\norm{x-(y-\reglambda  \nabla f(y)}^2 \\
                    &= \prox_{\phi_\sigma} \circ  \left(\id - \reglambda  \nabla f\right)(y) = T_{\lambda,\sigma}(y).
                \end{split}
            \end{equation}
            The $\argmin$ is unique by Proposition~\ref{prop:GSPnP_prox} and by definition of the $\argmin$, $x_{k+1}= T_{\lambda,\sigma}(x_k)$ implies that 
             \begin{equation}\label{relation2}
             Q(x_{k+1},x_k)\leq Q(x_{k},x_k).
                         \end{equation}
            Moreover, with $f$ being $L_f$-smooth, we have by the descent lemma, for any $t \leq \frac{1}{L_f}$ and any
            ${x,y \in \mathbb{R}^n}$,
            \begin{equation}
                \label{eq:descent}
                f(x) \leq f(y) + \langle x-y, \nabla f(y) \rangle + \frac{1}{2t}\norm{x-y}^2.
            \end{equation}
            Hence for every ${x,y \in \mathbb{R}^n}$ and taking $t=\reglambda < \frac{1}{L_f}$ in relation~\eqref{eq:descent}, we get from relation~\eqref{def:Q}
            \begin{equation}\label{relation3}
             Q(x,y) \geq F_{\lambda,\sigma}(x) .
            \end{equation}
            Therefore, combining from ~\eqref{relation1},~\eqref{relation2} and~\eqref{relation3}, we get at iteration $k$,
            \begin{equation}
                F_{\lambda,\sigma}(x_{k+1}) \leq Q(x_{k+1},x_k) \leq Q(x_k,x_k) = F_{\lambda,\sigma}(x_k) .
            \end{equation}
            The sequence $(F_{\lambda,\sigma}(x_k))$ is thus non-increasing and lower-bounded by assumption. $(F_{\lambda,\sigma}(x_k))$ thus converges to a limit $F_{\lambda,\sigma}^*$.

        \item[(ii)]
              Note that $Q(x_{k+1},x_k) \leq Q(x_k,x_k)$ in~\eqref{relation2} implies
              \begin{equation}
               \phi_{\sigma}(x_{k+1}) \leq \phi_{\sigma}(x_{k}) - \reglambda \langle x_{k+1}-x_k, \nabla f(x_k) \rangle -
                \frac{1}{2}\norm{x_{k+1}-x_k}^2 .
              \end{equation}
              Using again relation ~\eqref{eq:descent} with stepsize $t=\frac{1}{L_f}$, we get
            \begin{equation}
                \label{eq:decrease_F}
                \begin{split}
                    F_{\lambda,\sigma}(x_{k+1}) &= \reglambda  f(x_{k+1}) +  \phi_{\sigma}(x_{k+1}) \\
                    &\leq  \phi_{\sigma}(x_{k}) -  \reglambda \langle x_{k+1}-x_k, \nabla f(x_k) \rangle -
                    \frac{1}{2}\norm{x_{k+1}-x_k}^2 \\
                    &+  \reglambda  f(x_{k}) +   \reglambda \langle x_{k+1}-x_k, \nabla f(x_k) \rangle +
                    \frac{L_f}{2}\norm{x_{k+1}-x_k}^2 \\
                    &= F_{\lambda,\sigma}(x_{k}) - \frac{1}{2} (1-{L_f})\norm{x_{k+1}-x_k}^2.
                \end{split}
            \end{equation}

            Summing over $k=0,1,...,m$ gives
            \begin{equation}
                \begin{split}
                    \sum_{k=0}^m \norm{x_{k+1}-x_k}^2 &\leq \frac{2}{1-L_f} \left(F_{\lambda,\sigma}(x_0) -F_{\lambda,\sigma}(x_{m+1})\right) \\
                    &\leq \frac{2}{1-L_f} \left(F_{\lambda,\sigma}(x_0)-F_{\lambda,\sigma}^*\right) .
                \end{split}
            \end{equation}
            Therefore, $\lim_{k\to\infty} \norm{x_{k+1}-x_k} = 0$ with the convergence rate  $\gamma_k = \min_{0 \leq i \leq k}\norm{x_{i+1}-x_i}^2 \leq \frac{2}{k} \frac{F_{\lambda,\sigma}(x_0)-\lim{F_{\lambda,\sigma}(x_k)}}{1-L_f}$

         \item[(iii)]
            We begin by the two following lemmas characterizing the proximal gradient descent operator $T_{\lambda,\sigma}$.
            \begin{lemma}
                \label{lem:statio}
                With the assumptions of Theorem~\ref{thm:PnP-PGD} (in particular that $\prox_{\phi_\sigma}=(\id + \partial \phi_\sigma)^{-1}$ is single-valued), for $x^* \in \mathbb{R}^n$, $x^*$ is a fixed point
                of the proximal gradient descent operator $T_{\lambda,\sigma} = \prox_{\phi_\sigma} \circ  (\id - \reglambda  \nabla f)$, \emph{i.e.} $T_{\lambda,\sigma}(x^*) = x^*$, if and only if $x^*$
                is a stationary point of $F_{\lambda,\sigma}$, \emph{i.e.} $- \reglambda \nabla  f(x^*) \in \partial \phi_\sigma (x^*)$.
            \end{lemma}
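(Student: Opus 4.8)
The plan is to reduce the fixed-point equation $T_{\lambda,\sigma}(x^*)=x^*$ to the inclusion $-\lambda\nabla f(x^*)\in\partial\phi_\sigma(x^*)$ by invoking the structural description of $\prox_{\phi_\sigma}$ obtained in Proposition~\ref{prop:GSPnP_prox}, rather than any generic convex-analysis identity (which would not apply here, $\phi_\sigma$ being nonconvex). In the setting of Theorem~\ref{thm:PnP-PGD}, $g_\sigma$ is $\mathcal{C}^2$ on $\mathbb{R}^n$, so $D_\sigma=\id-\nabla g_\sigma=\prox_{\phi_\sigma}$ is everywhere defined and single-valued, and $T_{\lambda,\sigma}(x^*)=D_\sigma\bigl(x^*-\lambda\nabla f(x^*)\bigr)$ makes sense for every $x^*\in\mathbb{R}^n$.

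First I would dispose of the case $x^*\notin\Im(D_\sigma)$. By the definition~\eqref{eq:phi} of $\phi_\sigma$, the map $\prox_{\phi_\sigma}$ takes its values in $\Im(D_\sigma)$, so $T_{\lambda,\sigma}(x^*)\in\Im(D_\sigma)$ and therefore $T_{\lambda,\sigma}(x^*)\neq x^*$; at the same time $\phi_\sigma(x^*)=+\infty$, hence $\partial\phi_\sigma(x^*)=\emptyset$ and the inclusion $-\lambda\nabla f(x^*)\in\partial\phi_\sigma(x^*)$ also fails. Both sides of the asserted equivalence are false, so it holds trivially in this case.

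It remains to treat $x^*\in\Im(D_\sigma)$. By Proposition~\ref{prop:GSPnP_prox}(iv), $\Im(D_\sigma)$ is open and $\phi_\sigma$ is $\mathcal{C}^1$ on it, so $\partial\phi_\sigma(x^*)=\{\nabla\phi_\sigma(x^*)\}$ and the inclusion $-\lambda\nabla f(x^*)\in\partial\phi_\sigma(x^*)$ is equivalent to the equality $\nabla\phi_\sigma(x^*)=-\lambda\nabla f(x^*)$. On the other hand, $D_\sigma$ is injective (Proposition~\ref{prop:GSPnP_prox}(ii)) and $x^*\in\Im(D_\sigma)$, so $T_{\lambda,\sigma}(x^*)=D_\sigma\bigl(x^*-\lambda\nabla f(x^*)\bigr)=x^*$ holds if and only if $x^*-\lambda\nabla f(x^*)=D_\sigma^{-1}(x^*)$; using the identity $D_\sigma^{-1}(x^*)=x^*+\nabla\phi_\sigma(x^*)$ of Proposition~\ref{prop:GSPnP_prox}(iv), this reads precisely $\nabla\phi_\sigma(x^*)=-\lambda\nabla f(x^*)$. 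Chaining the two equivalences yields the lemma.

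I do not anticipate a genuine obstacle: the whole argument rests on Proposition~\ref{prop:GSPnP_prox}, and the only point requiring a little care is exactly the one flagged above, namely that one must not invoke the convex relation $\prox_{\phi_\sigma}=(\id+\partial\phi_\sigma)^{-1}$ as an equivalence of set-valued maps, but instead exploit that $D_\sigma$ is single-valued and injective with explicit inverse $\id+\nabla\phi_\sigma$ on $\Im(D_\sigma)$, and separately rule out by hand the degenerate case $x^*\notin\Im(D_\sigma)$ where $\phi_\sigma$ is infinite.
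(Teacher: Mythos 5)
Your proof is correct, and it takes a genuinely more careful route than the paper's. The paper's proof is a four-line chain that treats $\prox_{\phi_\sigma}=(\id+\partial\phi_\sigma)^{-1}$ as a two-way identity of set-valued maps and simply unwinds it; since $\phi_\sigma$ is nonconvex, only the inclusion $\prox_{\phi_\sigma}\subseteq(\id+\partial\phi_\sigma)^{-1}$ is automatic (a global minimizer is a critical point, not conversely), which is precisely why the lemma's statement has to smuggle the identity in as a parenthetical hypothesis. You instead derive the equivalence from the structural facts of Proposition~\ref{prop:GSPnP_prox}: single-valuedness and injectivity of $D_\sigma$, openness of $\Im(D_\sigma)$, smoothness of $\phi_\sigma$ there, and the explicit inverse $D_\sigma^{-1}=\id+\nabla\phi_\sigma$ from point (iv), plus a separate (and necessary) disposal of the degenerate case $x^*\notin\Im(D_\sigma)$ where $\partial\phi_\sigma(x^*)=\emptyset$. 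What your approach buys is that the delicate direction --- stationarity implies fixed point, i.e.\ that a critical point of $z\mapsto\phi_\sigma(z)+\frac12\norm{z-y}^2$ is actually the global minimizer --- is genuinely justified rather than asserted; what the paper's buys is brevity. Two trivial nits: openness of $\Im(D_\sigma)$ is stated in point (ii) of the proposition, not (iv); and you should say explicitly that the limiting subdifferential reduces to the singleton $\{\nabla\phi_\sigma(x^*)\}$ at points where $\phi_\sigma$ is continuously differentiable on an open neighborhood, which is the subdifferential notion the paper uses later in Appendix~\ref{app:proof_thm_DRS2}.
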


            \begin{proof}
                By definition of the proximal operator, we have
                \begin{equation}
                    \begin{split}
                        T_{\lambda,\sigma}(x^*) = x^* &\Leftrightarrow x^* = \prox_{\phi_\sigma} \circ  (\id - \reglambda  \nabla f)( x^*) \\
                          &\Leftrightarrow x^*=(\id + \partial \phi_\sigma)^{-1}(x^*-\reglambda   \nabla f(x^*))\\
                        &\Leftrightarrow - x^*+x^* -\reglambda   \nabla f(x^*)  \in \partial \phi_\sigma (x^*) \\
                        &\Leftrightarrow -\reglambda   \nabla f(x^*) \in \partial \phi_\sigma (x^*).
                    \end{split}
                \end{equation}
            \end{proof}

            \begin{lemma}
                \label{lem:lipT}
                With the assumptions of Theorem~\ref{thm:PnP-PGD}, $T_{\lambda,\sigma}$ is $(1+ \reglambda  L_f)(1+L)$ Lipschitz-continuous.
            \end{lemma}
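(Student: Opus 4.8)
The plan is to exploit the fact, established in Proposition~\ref{prop:GSPnP_prox}(ii), that $\prox_{\phi_\sigma}$ coincides with the denoiser $D_\sigma = \id - \nabla g_\sigma$ on $\Im(D_\sigma)$ (and is everywhere single-valued with values in $\Im(D_\sigma)$). This lets me write $T_{\lambda,\sigma} = D_\sigma \circ (\id - \reglambda \nabla f)$ as a composition of two explicit maps, so that it suffices to bound the Lipschitz constant of each factor and multiply.

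First I would treat the inner map $\id - \reglambda \nabla f$: for any $x,y \in \mathbb{R}^n$, the triangle inequality together with the $L_f$-Lipschitz continuity of $\nabla f$ gives
\begin{equation}
\norm{(x - \reglambda \nabla f(x)) - (y - \reglambda \nabla f(y))} \leq \norm{x-y} + \reglambda \norm{\nabla f(x) - \nabla f(y)} \leq (1 + \reglambda L_f)\norm{x-y},
\end{equation}
so $\id - \reglambda \nabla f$ is $(1+\reglambda L_f)$-Lipschitz. Next I would treat the outer map $D_\sigma = \id - \nabla g_\sigma$: since $\nabla g_\sigma$ is $L$-Lipschitz, the same triangle-inequality argument yields
\begin{equation}
\norm{D_\sigma(x) - D_\sigma(y)} \leq \norm{x-y} + \norm{\nabla g_\sigma(x) - \nabla g_\sigma(y)} \leq (1 + L)\norm{x-y},
\end{equation}
so $D_\sigma$ is $(1+L)$-Lipschitz. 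Composing, $T_{\lambda,\sigma} = D_\sigma \circ (\id - \reglambda \nabla f)$ is $(1+\reglambda L_f)(1+L)$-Lipschitz, which is the claim.

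There is no real obstacle here beyond the conceptual point worth emphasizing: one cannot in general control the Lipschitz constant of the proximal operator of a \emph{nonconvex} function, but the explicit closed form $\prox_{\phi_\sigma} = \id - \nabla g_\sigma$ from Proposition~\ref{prop:GSPnP_prox} sidesteps this entirely, at the cost of replacing the usual nonexpansiveness of a convex prox by the weaker bound $1+L$. The only thing to be careful about is that the composition identity $T_{\lambda,\sigma} = D_\sigma \circ (\id - \reglambda\nabla f)$ is valid globally because Proposition~\ref{prop:GSPnP_prox}(ii) guarantees $D_\sigma = \prox_{\phi_\sigma}$ as single-valued maps on all of $\mathcal{X}$.
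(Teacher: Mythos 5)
Your proof is correct and follows exactly the paper's argument: identify $\prox_{\phi_\sigma}=D_\sigma=\id-\nabla g_\sigma$, bound the inner map $\id-\reglambda\nabla f$ by $(1+\reglambda L_f)$ and the outer map by $(1+L)$ via the triangle inequality, and multiply the constants under composition. You merely spell out the triangle-inequality steps that the paper leaves implicit, so no further comment is needed.
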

            \begin{proof}
              The fonction $\id - \reglambda  \nabla f$ is $1+ \reglambda  L_f$-Lipschitz and $\prox_{\phi_\sigma} = D_\sigma = \id - \nabla g_\sigma$ is $1+L$-Lipschitz.
              By composition, $T_{\lambda,\sigma}$ is $(1+ \reglambda  L_f)(1+L)$-Lipschitz.
            \end{proof}

            We can now turn to the proof of (iii). Let $x^*$ be a cluster point of $(x_k)_{k\geq 0}$. Then there exists
            a subsequence $(x_{k_j})_{j \geq 0}$ converging to $x^*$.
            We have $\forall j \geq 0$,
            \begin{equation}
                \begin{split}
                    \norm{x^*-T_{\lambda,\sigma}(x^*) } &\leq \norm{x^*-x_{k_j}} + \norm{x_{k_j}-T_{\lambda,\sigma}(x_{k_j})} +  \norm{T_{\lambda,\sigma}
                    (x_{k_j})-T_{\lambda,\sigma}(x^*) } \\
                    &\leq \left( 1+(1+\reglambda  L_f)(1+L)  \right)\norm{x^*-x_{k_j}} + \norm{x_{k_j}-T_{\lambda,\sigma}(x_{k_j})} \ \text{by Lemma
                    \ref{lem:lipT}.}
                \end{split}
            \end{equation}
            Using (ii), the right-hand side of the inequality tends to $0$ as $j \to \infty$. Thus ${\norm{x^*-T_{\lambda,\sigma}(x^*)}=0}$ and $x^* = T_{\lambda,\sigma}(x^*)$, which by Lemma \ref{lem:statio} means that $x^*$ is a
            stationary point of $F_{\lambda,\sigma}$.

        \item[(iv)] The convergence of the iterates follows Theorem 5.1 from~\cite{attouch2013convergence}. To apply this result, we only need to ensure that $\phi_{\sigma}$ verifies the Kurdyka-Lojasiewicz (KL) inequality on $\Im(D_\sigma)$, since the stationary points $x^*$ of $F_{\lambda,\sigma}$ are fixed points of $T_{\lambda,\sigma}$ and belong to $\Im(D_\sigma)$. 
        We supposed $g_{\sigma}$ to be a semi-algebraic mapping.
        Semi-algebraic mappings are KL. As mentioned in~\cite{attouch2013convergence}, by the Tarski–Seidenberg theorem, the composition and inverse of
        semi-algebraic mappings are semi-algebraic mappings. Therefore, by the definition of $\phi_\sigma$, we  get that $\phi_\sigma$ is a semi-algebraic and thus KL on $\Im(D_\sigma)$.

         \end{proof}

    \section{Proofs of PnP-DRS convergence theorems}

    Theorems~\ref{thm:PnP-DRS} and~\ref{thm:PnP-DRS2} come from the convergence results presented in Theorems 4.1, 4.3 and 4.4 from~\cite{themelis2020douglas}.
    In their paper,  the following minimization problem is considered
    \begin{equation}
        \label{eq:themelis_objectif}
        \argmin_x F(x)=f_1(x) + f_2(x)
    \end{equation}
    where $f_1$ is assumed $\mathcal{C}^1$ on $\mathbb{R}^n$, $M_{f_1}$ semiconvex, with gradient $L_{f_1}$-Lipschitz.
    
    We first define the notion of \emph{semiconvexity} (also refered as \emph{weak convexity} in the literature).
    \begin{definition}[Semiconvex functions]
        $f$ proper lower semicontinuous is said to be $M$-semiconvex with $M \in \mathbb{R}$ if $x \to f(x) +  \frac{M}{2}\norm{x}^2$ is convex.
    \end{definition}

    In this setting, the authors of \cite{themelis2020douglas} show the convergence of DRS iterations
    \begin{equation}
        \label{eq:themelis_DRS}
         \left\{\begin{array}{l} y_{k+1} = \prox_{\tau f_1}(x_k) \\ z_{k+1} = \prox_{\tau f_2}(2y_{k+1}-x_{k}) \\ x_{k+1} =  x_{k} + (z_{k+1}-y_{k+1})
                \end{array}\right.
    \end{equation}
   by using the Douglas–Rachford envelope,
    \begin{equation}
    \label{eq:themelis_DRE}
    F^{DR}_{\lambda,\sigma}(x) = f_1(y) +  f_2(z) + \frac{1}{\tau} \langle y-x, y-z \rangle + \frac{1}{2\tau} \norm{y-z}^2,
    \end{equation}
where $y$ and $z$ are obtained from $x$ by the two first relations of~\eqref{eq:themelis_DRS}.

    \begin{theorem}[Convergence results of DRS from~\cite{themelis2020douglas}] \label{thm:reference_DRS}
    Let $f_1 : \mathbb{R}^n \to \mathbb{R} \cup \{+\infty\}$ and $f_2 : \mathbb{R}^n \to \mathbb{R} \cup \{+\infty\}$ be proper, lower semicontinuous functions.
    Suppose that $f_1$ is $M_{f_1}$-semiconvex and $\mathcal{C}^1$ on $\mathbb{R}^n$, with gradient $L_{f_1}$-Lipschitz.
   Then, for a stepsize
   \begin{equation}\label{thm:stepsize}
   \tau < \min \left( \frac{1}{2M_{f_1}},\frac{1}{L_{f_1}} \right),
   \end{equation}
   we have
    \begin{itemize}
        \item[(a)] (\textbf{Descent lemma}) $F^{DR}_{\lambda,\sigma}(x_k) - F^{DR}_{\lambda,\sigma}(x_{k+1}) \geq \frac{c}{(1+\tau L_{f_1})^2} \norm{x_k - x_{k+1}}^2 $ and $F^{DR}_{\lambda,\sigma}(x_k) - F^{DR}_{\lambda,\sigma}(x_{k+1}) \geq c  \norm{y_k - y_{k+1}}^2$ for some constant $c>0$ (Theorem 4.1).
        \item[(b)] $(y_k - z_k)$ vanishes with rate $\min_{k \leq K} \norm{y_k - z_k} = \mathcal{O}(\frac{1}{\sqrt{K}})$ (Theorem 4.3, (i)).
        \item[(c)] $(y_k)$ and $(z_k)$ have the same cluster points, all of them being stationary for $F$ (from equation \eqref{eq:themelis_objectif}), and on which $F$ has the same (finite) value, being the limit of $(F^{DR}_{\lambda,\sigma}(x_k))$ (Theorem 4.3, (ii))
        \item[(d)] If the sequence $(y_k,z_k,x_k)$ is bounded, and $f_1$ and $f_2$ are KL, then the sequences $(y_k)$ and $(z_k)$ converge to (the same) stationary point of $F$ (Theorem 4.4).
    \end{itemize}
    \end{theorem}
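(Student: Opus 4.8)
The plan is to use the Douglas--Rachford envelope $F^{DR}_{\lambda,\sigma}$ from~\eqref{eq:themelis_DRE} as a Lyapunov function for the iteration~\eqref{eq:themelis_DRS} and to derive all four claims from a single sufficient-decrease inequality. First I would record the optimality conditions of the two proximal steps. Since $f_1$ is $\mathcal{C}^1$ and $\tau M_{f_1} < 1$ (a consequence of $\tau < 1/(2M_{f_1})$), the subproblem $y \mapsto f_1(y) + \frac{1}{2\tau}\norm{y-x_k}^2$ is strongly convex, so $\prox_{\tau f_1}$ is single-valued and the $y$-step gives the exact identity $\nabla f_1(y_{k+1}) = (x_k - y_{k+1})/\tau$, equivalently $x_k = y_{k+1} + \tau\nabla f_1(y_{k+1})$. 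The $z$-step yields the inclusion $(2y_{k+1}-x_k-z_{k+1})/\tau \in \partial f_2(z_{k+1})$. I would also note that $x_{k+1}-x_k = z_{k+1}-y_{k+1}$, so the fixed-point residual coincides with the gap $y-z$.

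The heart of the argument is the descent lemma (a). I would evaluate $F^{DR}_{\lambda,\sigma}(x_k)$ and $F^{DR}_{\lambda,\sigma}(x_{k+1})$ by substituting the prox outputs $(y_{k+1},z_{k+1})$ and $(y_{k+2},z_{k+2})$, then bound the difference from below using three ingredients: the prox inequality for $f_2$ comparing $z_{k+2}$ against $z_{k+1}$; the $M_{f_1}$-semiconvexity of $f_1$, applied through $f_1(y_{k+2}) \geq f_1(y_{k+1}) + \langle \nabla f_1(y_{k+1}), y_{k+2}-y_{k+1}\rangle - \frac{M_{f_1}}{2}\norm{y_{k+2}-y_{k+1}}^2$; and the $L_{f_1}$-smoothness descent estimate. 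Collecting the quadratic terms, the coefficient in front of $\norm{y_{k+1}-y_{k+2}}^2$ is positive precisely because $\tau < 1/(2M_{f_1})$, which produces the constant $c>0$ and the form of (a) in $y$-increments. Converting this to the form in $x$-increments uses the identity $x_k = y_{k+1}+\tau\nabla f_1(y_{k+1})$ together with $L_{f_1}$-smoothness, giving $\norm{x_k-x_{k+1}} \leq (1+\tau L_{f_1})\norm{y_{k+1}-y_{k+2}}$ and hence the factor $(1+\tau L_{f_1})^{-2}$.

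From (a) the remaining claims follow by telescoping and limit arguments. Summing the descent inequality and using that $F^{DR}_{\lambda,\sigma}$ is bounded below (which holds under the stepsize condition whenever $F$ is) gives $\sum_k \norm{x_k-x_{k+1}}^2 < \infty$; since $x_{k+1}-x_k = z_{k+1}-y_{k+1}$, this yields $\norm{y_k-z_k}\to 0$ with the claimed $\mathcal{O}(1/\sqrt{K})$ rate on the running minimum, proving (b). For (c), because $y_k-z_k\to 0$ the sequences $(y_k)$ and $(z_k)$ share cluster points; taking a convergent subsequence and passing to the limit in the two optimality conditions---using continuity of $\nabla f_1$ and closedness of the graph of $\partial f_2$---gives $0 \in \nabla f_1(x^*)+\partial f_2(x^*)$, so $x^*$ is stationary for $F$. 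Monotone convergence of $(F^{DR}_{\lambda,\sigma}(x_k))$ together with $y=z$ at any cluster point forces $F$ to attain the common limiting value there.

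Finally, (d) follows from the abstract KL convergence scheme. On the Lyapunov sequence I would verify the three standard ingredients: the sufficient decrease from (a); a relative-error bound $\operatorname{dist}(0,\partial F^{DR}_{\lambda,\sigma}(x_{k+1})) \leq C\,\norm{x_k-x_{k+1}}$ obtained by differentiating the envelope and reusing the prox optimality relations; and a continuity condition along the bounded sequence. Invoking the KL inequality for $F^{DR}_{\lambda,\sigma}$ (available once $f_1,f_2$ are KL) then yields finite length of $(x_k)$, hence convergence of $(x_k)$ and, through $y_k-z_k\to 0$, convergence of $(y_k)$ and $(z_k)$ to the same stationary point. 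I expect the descent lemma (a) to be the main obstacle: tracking the exact quadratic coefficients so that the threshold $\tau < \min(1/(2M_{f_1}),\,1/L_{f_1})$ emerges as precisely the condition for positivity of $c$ is the delicate computation on which everything else rests.
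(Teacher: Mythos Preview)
The paper does not actually prove this statement: Theorem~\ref{thm:reference_DRS} is explicitly presented as a quotation of results from \cite{themelis2020douglas} (the parenthetical pointers ``Theorem~4.1'', ``Theorem~4.3'', ``Theorem~4.4'' refer to that reference), and the appendix only \emph{applies} it to derive Theorems~\ref{thm:PnP-DRS} and~\ref{thm:PnP-DRS2}. There is thus no in-paper proof to compare against.

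That said, your sketch is a faithful outline of the argument in \cite{themelis2020douglas}: the Douglas--Rachford envelope as Lyapunov function, the exact optimality condition $x_k=y_{k+1}+\tau\nabla f_1(y_{k+1})$ from the smooth step, the combination of the $f_2$-prox inequality with semiconvexity and $L_{f_1}$-smoothness of $f_1$ to obtain the sufficient decrease in (a), the telescoping for (b), subsequential limits through the graph of $\partial f_2$ for (c), and the abstract KL recipe for (d). The only point I would flag as slightly optimistic is the relative-error condition in (d): $F^{DR}_{\lambda,\sigma}$ need not be differentiable (because of $f_2$), so one works with the limiting subdifferential of the envelope and must check that the prox relations produce an explicit subgradient whose norm is controlled by $\norm{x_k-x_{k+1}}$; this is what \cite{themelis2020douglas} do, but it is a bit more than ``differentiating the envelope''. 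Otherwise your plan matches the source and would succeed.
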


    \subsection{Proof of Theorem~\ref{thm:PnP-DRS}}
    \label{app:proof_thm_DRS}

    If $f$ is differentiable, we can directly apply Theorem~\ref{thm:reference_DRS} with $f_1 = \lambda f$ and $f_2=\phi_{\sigma}$.
    As $f$ is convex, the condition~\eqref{thm:stepsize} on the stepsize becomes $\lambda \tau < \frac{1}{L_{f}}$ \emph{i.e.} $\lambda L_f < 1$ for $\tau=1$.   

    \subsection{Proof of Theorem~\ref{thm:PnP-DRS2}}
    \label{app:proof_thm_DRS2}

We seek to apply Theorem~\ref{thm:reference_DRS} with the data-fidelity term  $f_2 = \reglambda  f$ and the regularization $f_1=\phi_\sigma$. With this setup, the DRS updates~\eqref{eq:themelis_DRS} correspond to the iterations~\eqref{eq:PnP-DRS2} and
the Douglas–Rachford envelope leads to  equation~\eqref{eq:DRE}. We have to adapt Theorem~\ref{thm:reference_DRS}, since the function $\phi_\sigma$ we consider is only smooth  on the subspace  $\Im(D_\sigma)$ and not on the whole space~$\mathbb{R}^n$ as in~\cite{themelis2020douglas}. This is the purpose of the four points (i-iv) of Theorem~\ref{thm:PnP-DRS} that correspond to  the four points (a-d) of Theorem~\ref{thm:reference_DRS} and are shown below.
\begin{proof}
\item[(i)] The proof of the descent lemma Theorem~\ref{thm:reference_DRS} (a) uses successively the following properties:
\begin{enumerate}
    \item The gradient descent lemma of a smooth and semiconvex function.
    \item The characterisation of the proximal mapping of differentiable functions : for $f$ differentiable $x = \prox_f(y)$ iif $y = x + \nabla f(x)$
    \item The strong monotonicity of the proximal operator of a smooth function.
\end{enumerate}

We consider $\Im(D_\sigma)$ convex. We first show that $\phi_\sigma$ is semiconvex on $\Im(D_\sigma)$.

$D_\sigma$ is $(L+1)$-Lipschitz. We directly apply Proposition 2 from~\cite{gribonval2020characterization} to get that there is $\tilde \phi_\sigma$ such that $\forall x \in \mathbb{R}^n, D_\sigma(x) \in \prox_{\tilde \phi_\sigma}(x)$ and 
$x \to \tilde \phi_\sigma(x) + \frac{L}{2(L+1)}\norm{x}^2$ is convex. As $\frac{L}{L+1}<1$, $\forall x \in \mathbb{R}^n$,  $y \to \tilde \phi_\sigma(y) + \frac{1}{2}\norm{x-y}^2$ is strongly convex 
and $\prox_{\tilde \phi_\sigma}(x)$ is single valued. Therefore 
\begin{equation}
    \forall x \in  \mathbb{R}^n, D_\sigma(x) = \prox_{\tilde \phi_\sigma}(x) = \prox_{\phi_\sigma}(x).
\end{equation}
For any $x \in \Im(D_\sigma)$ with $y \in \mathcal{X}$ such that $x=D_\sigma(y)$, the optimality condition of the proximal operator gives  
\begin{equation}
    \left\{\begin{array}{l} 
        y-x \in \partial \phi_\sigma(x) \\
        y-x \in \partial \tilde \phi_\sigma(x) 
    \end{array}\right.
\end{equation}
and then $\partial \phi_\sigma(x) \cap \partial \tilde \phi_\sigma(x) \neq \emptyset$.
 Moreover, $\Im(D_\sigma)$ is convex, thus connected and thus polygonally connected.
By \cite{gribonval2020characterization} Corollary 9, $\exists K \in \mathbb{R}$ such that $ \phi_\sigma = \tilde \phi_\sigma + K$ on  $\Im(D_\sigma)$.
As $\tilde \phi_\sigma$ is $\frac{L}{L+1}$-semiconvex, we get that $\phi_\sigma$ is $\frac{L}{L+1}$-semiconvex on the convex set $\Im(D_\sigma)$. 

The three points 1. 2. and 3. are then still verified on $\Im(D_\sigma)$ and Theorem~\ref{thm:reference_DRS}(a) is still valid. From (a) we get immediately that $F_{\lambda,\sigma}(x_k)$ is nonincreasing. With our hypothesis, $F_{\lambda,\sigma}(x_k)$ is bounded from below, thus converges.

    As  given in Proposition~\ref{prop:GSPnP_prox},  $\nabla \phi_\sigma$ is $\frac{L}{1-L}$ Lipschitz on $\Im(D_\sigma)$ and $\phi_\sigma$ is $\frac{L}{L+1}$ semiconvex,  with $0<L<1$.
    As we fix the stepsize to $\tau=1$ in our PnP-DRS algorithm~\eqref{eq:PnP-DRS2}, the condition~\eqref{thm:stepsize} on the stepsize in the statement of Theorem~\ref{thm:reference_DRS} becomes
    \begin{equation}
        \begin{split}
            1 &< \min \left( \frac{L+1}{2L},\frac{1-L}{L} \right) \\
            \Leftrightarrow L &< \frac{1}{2} \\
        \end{split}
    \end{equation}

\item[(ii)] As $F_{\lambda,\sigma}$ is bounded from below, the proof of Theorem~\ref{thm:reference_DRS} (b) is still valid when  $\phi_\sigma$  is smooth only on $\Im(D_\sigma)$ and we have $\min_{k \leq K} \norm{y_k - z_k} = \mathcal{O}(\frac{1}{\sqrt{K}})$.

\item[(iii)] In our setting, the limit of $y_k$ is not guaranteed to belong to $\Im(D_\sigma)$. Hence we can not follow the proof of Theorem~\ref{thm:reference_DRS} (c).
We rather rely on the convergence results from~\cite{li2016douglas}, Theorem 1, and show that
     for any cluster point $(y^*,z^*,x^*)$, we have that $y^*=z^*$ is a stationary point of $F_{\lambda,\sigma}$

Assuming that the whole sequence $(y_k,z_k,x_k)$ has a cluster point $(y^*,z^*,x^*)$, there exists  a subsequence $(y_{k_j},z_{k_j},x_{k_j})$ converging to $(y^*,z^*,x^*)$.

First, from (ii) we have that $y^* = z^*$.
Let us now show that $y^* = D_{\sigma}(x^*)$.
For $F_{\lambda,\sigma}^{DR}$ defined in~\eqref{eq:themelis_DRE},  we have from (a) that
\begin{equation}
    F_{\lambda,\sigma}^{DR}(x_k) - F_{\lambda,\sigma}^{DR}(x_{k+1}) \geq c  \norm{y_k - y_{k+1}}^2
\end{equation}
Summing from $k=1$ to $k=N-1 \geq 1$, we get
\begin{equation}
    c \sum_{k=1}^{N-1} \norm{y_k - y_{k+1}}^2 \leq F_{\lambda,\sigma}^{DR}(x_1) - F_{\lambda,\sigma}^{DR}(x_{N}) .
\end{equation}
Taking the limit $j \to +\infty$ with $N=k_j$ and recalling that $F_{\lambda,\sigma}^{DR}$ is lower semicontinuous, we obtain
\begin{equation}
    c \sum_{k=1}^{+\infty} \norm{y_k - y_{k+1}}^2 \leq F_{\lambda,\sigma}^{DR}(x_1) - F_{\lambda,\sigma}^{DR}(x_{*}) < +\infty,
\end{equation}
where the second inequality comes from the fact that $\inf F_{\lambda,\sigma}^{DR} = \inf F_{\lambda,\sigma} > +\infty $ (Theorem 3.4 from \cite{themelis2020douglas}).

Therefore $\norm{y_k - y_{k+1}} \to 0$ when $k \to +\infty$, in particular, $\lim_{j \to +\infty} y_{k_j+1} = y^*$.
However, by continuity of $D_\sigma$, $y_{k_j+1} = D_{\sigma}(x_{k_j}) \to D_{\sigma}(x^*)$. Therefore,

\begin{equation}
y^* = D_{\sigma}(x^*).
\end{equation}

We now show that $ \lim_{j \to \infty} f(z_{k_j}) = f(z^*)$. As $z_k=\prox_{\reglambda f}(y_k)$ is defined as a minimizer, we have
    \begin{equation}
        \reglambda f(z_k) + \frac{1}{2}\norm{2y_k - x_{k-1} - z_k}^2 \leq  \reglambda f(z^*) + \frac{1}{2}\norm{2y_k - x_{k-1} - z^*}^2.
    \end{equation}
Taking the limit along the convergent subsequence gives
    \begin{equation}
        \varlimsup_{j \to \infty} f(z_{k_j}) \leq f(z^*).
    \end{equation}
From the lower semicontinuity of $f$, we also have $ \varliminf_{j \to \infty} f(z_{k_j}) \geq f(z^*)$ and therefore  $ \lim_{j \to \infty} f(z_{k_j}) = f(z^*)$.

Following~\cite{li2016douglas}, the subdifferential of a proper, nonconvex function $f$ is defined as the limiting subdifferential
\begin{equation}
    \partial f(x):= \left\{ v \in \mathbb{R}^n, \exists x_k, f(x_k) \rightarrow f(x), v_k \rightarrow v,  \varliminf_{z \to x_k} \frac{f(z)-f(x_k) - \langle v_k,z-x_k \rangle }{\norm{z-x_k}} \geq 0 \ \forall k \right\}
\end{equation}
which verifies
\begin{equation}
    \label{eq:subdiff_prop}
    \left\{ v \in \mathbb{R}^n, \exists x_k, f(x_k) \rightarrow f(x), v_k \rightarrow v, v_k \in \partial f(x_k) \right\} \subseteq \partial f(x)
\end{equation}

From the definition of the $y$ and $z$ updates in~\eqref{eq:PnP-DRS2}, we have
 \begin{equation}
 \left\{
        \begin{array}{ll}
            0 &= \nabla \phi_{\sigma}(y_{k+1}) + (y_{k+1}-x_k), \\
            0 &\in \reglambda  \partial f(z_{k+1}) + (z_{k+1}-2y_{k+1} + x_k).
        \end{array}\right.
    \end{equation}
Hence summing both equations, $\forall k \geq 1$, we get
\begin{equation}
    \nabla \phi_{\sigma}(y_{k}) + (z_{k}-y_{k}) \in -\reglambda  \partial f(z_{k}) .
\end{equation}
Passing to the limit $j \to +\infty$ in the converging subsequence and using the fact that $z_{k}-y_{k} \to 0$ and $y^* = z^*$, with the property~\eqref{eq:subdiff_prop} of the
    subdifferencial, we get
\begin{equation}
    \nabla \phi_{\sigma}(y^*) \in - \reglambda \partial f(y^*)
\end{equation}
that is to say $y^* = z^*$ is a stationary point of $F_{\lambda,\sigma}$.

\item[(iv)] Point (d) is still valid  when  $\phi_\sigma$  is smooth only on $\Im(D_\sigma)$.
\end{proof}

    \section{On the boundedness of the generates sequences}
    \label{app:bounded_seq}

    In order to obtain convergence of the iterates, in the Theorems~\ref{thm:PnP-PGD},~\ref{thm:PnP-DRS} and~\ref{thm:PnP-DRS2} the generated sequences are assumed to be bounded.
    As we detail below, we observe in our experiments that boundedness is always verified. Nevertheless, we can make sure that this property is satisfied.
    A sufficient condition for the boundedness of the iterates $(x_k)$ is the coercivity of the objective function $F_{\lambda,\sigma}$, that is, $\lim_{|x| \to \infty} F_{\lambda,\sigma}(x) = +\infty$ (because the non-increasing property gives $F_{\lambda,\sigma}(x_k) \leq F_{\lambda,\sigma}(x_0)$).
    In the case of Theorem~\ref{thm:PnP-DRS}, boundedness of $(y_k)$ and $(z_k)$ then follow respectively from the fact that $\prox_{\lambda f}$ is Lipschitz (\cite{themelis2020douglas}, Proposition 2.3) and that $y_k - z_k \to 0$.  
    In the case of Theorem~\ref{thm:PnP-DRS2}, boundedness of $(y_k)$ and $(z_k)$ also follow respectively from the fact that $D_\sigma$ is Lipschitz and that $y_k - z_k \to 0$.  

    Similar to~\cite{hurault2022gradient}, we can constrain $F_{\lambda,\sigma}$ to be coercive by adding a penalization that vanishes in the neighborhood of $[0,1]^n$ and which tends to $+\infty$ at $\infty$.
    For that, we consider the ball $B(a,r)$ with $a=(1/2, \ldots, 1/2)$ and $r = \sqrt{n}$, and the penalization
    \begin{equation}
      p_{a,r}(x) = \rho(\|x-a\|^2 - r ) \quad \text{with} \
    \rho(t) = \begin{cases}
      t^3 \ \text{if} \ t > 0 \\
      0 \ \text{otherwise}
    \end{cases} .
    \end{equation}     
    It is clear that $p_{a,r}$ vanishes on $B(a,r)$ and tends to $+\infty$ at $\infty$.
    Besides, one can %
    verify that $p_{a,r}$ is $\mathcal{C}^2$ (because $\rho$ is).
    Therefore, we can add an extra term to $g_{\sigma}$, thus leading to:
    \begin{equation}
    \begin{split}
        \hat g_{\sigma}(x) &= g_{\sigma}(x) + \gamma p_{a,r}(x)
        = \frac{1}{2} \norm{x - N_\sigma(x)}^2 + \gamma p_{a,r}(x) 
    \end{split}
    \end{equation}
    and $\gamma >0$ controls the penalization strength.

    The denoising operation becomes
    \begin{equation}
        \begin{split}
            \hat D_\sigma(x) = (\id - \nabla \hat g_\sigma)(x) = D_\sigma(x) - \gamma \nabla p_{a,r}(x) .
    \end{split}
    \end{equation}

    In practice we observe that all the iterates always remain in $B(a,r)$ and that the penalization $p_{a,r}(x)$ is never activated.

    \section{DRUNet architecture}
    \label{app:architecture}
The architecture of the DRUNet \emph{light} denoiser of  (\cite{zhang2021plug}) is given in Figure \ref{fig:architecture}.
    \begin{figure}[ht] \centering
    \includegraphics[width=\linewidth]{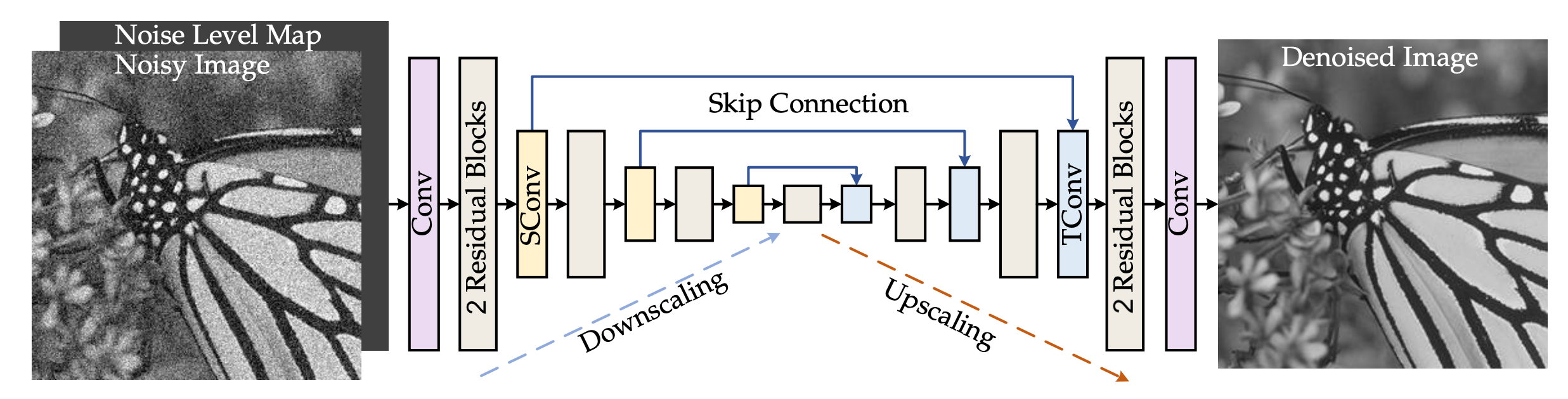}
    \caption{Architecture of the DRUNet \emph{light} denoiser (\cite{zhang2021plug}) used to parameterize $N_\sigma$.}
    \label{fig:architecture}
    \end{figure}

    \section{On the performance of nonexpansive denoisers}
    \label{app:nonexpansive_denoisers}
    
    We provide in this appendix denoising performance comparisons with a nonexpansive denoiser. To do so, we use the strategy exposed in Section~\ref{ssec:exp_denoiser} to train DRUNet
    directly to be nonexpansive. The pretrained DRUNet denoiser $N_\sigma$ is fine-tuned to denoise while being 1-Lipschitz with the following loss:
    \begin{equation}
       \mathbb{E}_{x \sim p, \xi_\sigma \sim \mathcal{N}(0,\sigma^2) }\left[ \norm{N_\sigma
        (x+\noise_\sigma)-x}^2+ \mu \max(\norm{J_{N_\sigma}(x+\noise_\sigma)}_S,1-\epsilon) \right]
    \end{equation}
    with $\epsilon = 0.1$ and different values of $\mu$.

    As in Section~\ref{ssec:exp_denoiser}, we analyse the denoising performance of the resulting denoiser nonexp-DRUNet (Table~\ref{tab:denoising_results2})
    as well as the maximal value of the spectral norm $\norm{J_{N_\sigma}(x)}_S,$ on the CBSD68 testset (Table~\ref{tab:denoising_lip2}).
    Table~\ref{tab:denoising_lip2}  illustrates that the penalization parameter has to be set to  $\mu=10^{-2}$  to make the Lipschitz constant  lower than $1$. On the other hand, as can be observed in Table~\ref{tab:denoising_results2}, such setting severely degrades denoising performances.

    From this observation, we argue that is less harmful for the denoising performance to impose nonexpansiveness on the denoiser residual $\id - D_\sigma$ than on the denoiser itself.
         \begin{table}[ht]
        \centering\footnotesize\setlength\tabcolsep{1.pt}
        \begin{tabular}{c c c c c c }
            $\sigma (./255)$ & 5 & 10 & 15 & 20 & 25 \\
            \midrule
            GS-DRUNet & $40.27$& $36.16$ & $33.92$ & $32.41$ & $31.28$   \\
            \midrule
            prox-DRUNet ($\mu = 10^{-2}$) & $40.04$ & $35.86$ & $33.51$ & $31.88$ & $30.64$   \\
            prox-DRUNet ($\mu = 10^{-3}$) & $40.12$ & $35.93$ & $33.60$ & $32.01$ & $30.82$    \\
            \midrule
            nonexp-DRUNet ($\mu = 10^{-2}$) & $34.92$ & $32.90$ & $31.42$ & $30.30$ & $29.42$    \\
            nonexp-DRUNet ($\mu = 10^{-3}$) & $39.71$ & $35.71$ & $33.50$ & $32.00$ & $30.89$   \\
        \end{tabular}
        \caption{Average PSNR denoising performance %
        of our prox-denoiser and compared methods on $256\times256$ center-cropped images from the CBSD68 dataset, for various
        noise levels $\sigma$.%
        }
        \label{tab:denoising_results2}
    \end{table}
            \begin{table}[ht]
        \centering\footnotesize\setlength\tabcolsep{2pt}
        \begin{tabular}{c c c c c c c c }
            $\sigma (./255)$ & 0 & 5 & 10 & 15 & 20 & 25 \\
            \midrule
            DRUNet  ($\mu = 0$) & $1.13$ & $1.73$ & $2.36$ & $2.67$ & $2.76$ & $3.22$\\
            nonexp-DRUNet ($\mu = 10^{-2}$) & $0.97$ & $0.98$ & $0.98$ & $0.98$ & $0.98$ & $0.98$\\
            nonexp-DRUNet ($\mu = 10^{-3}$) & $1.06$ & $1.07$ & $1.07$ & $1.10$ & $1.13$ & $1.20$\\
        \end{tabular}
        \caption{Maximal value of $\norm{J_{D_\sigma(x)}}_S$ obtained with contractive denoisers $D_\sigma=N_\sigma$ on $256\times256$ center-cropped CBSD68 dataset, for various
        noise levels $\sigma$.%
        }
        \label{tab:denoising_lip2}
    \end{table}
    \section{Experiments}
    \label{app:experiments}

    \subsection{Blur and anti-aliasing kernels}
    \label{app:kernels}

    We show  in Figures~\ref{fig:blur_kernels} and ~\ref{fig:SR_kernels} the kernels respectively used for the evaluation of
    the deblurring and SR methods.
        \begin{figure}[!ht] \centering
    \begin{tabular}{c}
    \includegraphics[width=0.08\textwidth]{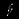}\;
    \includegraphics[width=0.08\textwidth]{images/deblurring/kernels/kernel_1.png}\;
    \includegraphics[width=0.08\textwidth]{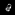}\;
    \includegraphics[width=0.08\textwidth]{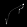}\;
    \includegraphics[width=0.08\textwidth]{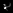}\\
    \includegraphics[width=0.08\textwidth]{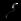}\;
    \includegraphics[width=0.08\textwidth]{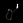}\;
    \includegraphics[width=0.08\textwidth]{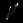}\;
    \includegraphics[width=0.08\textwidth]{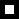}\;
    \includegraphics[width=0.08\textwidth]{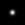}
    \end{tabular}
    \caption{The 10 blur kernels used for deblurring evaluation.\vspace{-0.3cm}
    }
    \label{fig:blur_kernels}
    \end{figure}

    \begin{figure}[!ht] \centering
    \includegraphics[width=0.08\textwidth]{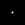}\;
    \includegraphics[width=0.08\textwidth]{images/SR/kernels/kernel_1.png}\;
    \includegraphics[width=0.08\textwidth]{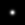}\;
    \includegraphics[width=0.08\textwidth]{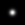}\;
    \caption{The 4 blur kernels used for super-resolution evaluation.\vspace*{-0.2cm}
    }
        \label{fig:SR_kernels}
        \end{figure}

    \subsection{Regularization parameters}
    \label{app:parameters}

    We give in Table~\ref{tab:parameters} the value of the parameters to obtain the results presented in
    Section~\ref{ssec:exp_restoration}. Note that the same parameters are taken for
    both deblurring and super-resolution, and for all the variety of considered kernels and down-sampling scales.
    They are just scaled with respect to the noise level $\nu$ in the input image.
    We keep the same parameters for PnP-PGD and PnP-DRSdiff. Both algorithms thus target a stationary points of the same objective function $F_{\lambda,\sigma}$.
    The blur kernels are normalized so that the Lipschitz constant $L_f = \frac{1}{2\nu^2} \norm{H^T H}_S$ of $\nabla f$ is equal to~$\frac{1}{2\nu^2}$.
    For PnP-PGD and PnP-DRSdiff, $\lambda$ is set as close as possible to its maximal possible value $\frac{1}{L_f}=\nu^2$  required for convergence in Theorems~\ref{thm:PnP-PGD} and~\ref{thm:PnP-DRS}.

    \begin{table}[!ht]
        \centering
        \begin{tabular}{c c c c c }
            $\nu (./255)$ &  & 2.55 & 7.65 & 12.75 \\
            \midrule
             \multirow{2}{*}{PGD \& DRSdiff}  & $\lambda/\nu^2$  & $0.99$ & $0.99$ & $0.99$\\
             &  $\sigma/\nu$&$ 0.75 $& $0.5$ & $0.5$ \\
             \midrule
            \multirow{2}{*}{DRS} & $\lambda/\nu^2$ & $5$ & $1.5$ &  $0.75$ \\
             & $\sigma/\nu$ & $2$ & $1$ & $0.5$ \\
        \end{tabular}
        \caption{Choice of parameters $(\lambda,\sigma)$ for both debluring and super-resolution experiments of  Section~\ref{ssec:exp_restoration}. These parameters are only scaled with respect to the input noise level $\nu$}
        \label{tab:parameters}
    \end{table}

 \subsection{Additional deblurring and super-resolution experiments}
    \label{app:add_experiments}

    We complement the numerical analysis with additional visual results and convergence curves.
    Figures~\ref{fig:deblurring2} and~\ref{fig:SR2} respectively give the deblurring and super-resolution results on the two images
    ''buttefly'' and ``leaves''. We also provide visual and PSNR comparison with competitive PnP methods. Note that on both experiments,
    all Prox-PnP algorithm converge towards a visually coherent output. The Prox-PnP-DRS reaches or outperforms the
    performance of the DPIR algorithm and restores sharp images.

    \begin{figure}[!ht] \centering
    \captionsetup[subfigure]{justification=centering}
    \begin{subfigure}[b]{.17\linewidth}
            \centering
            \begin{tikzpicture}[spy using outlines={rectangle,blue,magnification=4,size=1.2cm, connect spies}]
            \node {\includegraphics[height=2.5cm]{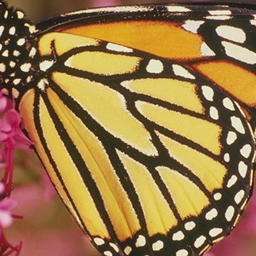}};
            \spy on (-1.1,-0) in node [left] at  (1.25,.62);
             \node at (-0.89,-0.89) {\includegraphics[scale=1.2]{images/deblurring/kernels/kernel_1.png}};
            \end{tikzpicture}
            \caption{Clean \\ ~}
        \end{subfigure}
    \begin{subfigure}[b]{.17\linewidth}
            \centering
            \begin{tikzpicture}[spy using outlines={rectangle,blue,magnification=4,size=1.2cm, connect spies}]
            \node {\includegraphics[height=2.5cm]{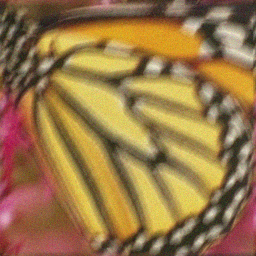}};
            \spy on (-1.1,-0) in node [left] at  (1.25,.62);
            \end{tikzpicture}
            \caption{Observed \\ ~}
        \end{subfigure}
    \begin{subfigure}[b]{.17\linewidth}
            \centering
            \begin{tikzpicture}[spy using outlines={rectangle,blue,magnification=4,size=1.2cm, connect spies}]
            \node {\includegraphics[height=2.5cm]{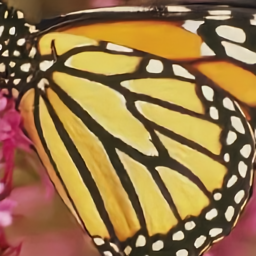}};
            \spy on (-1.1,-0) in node [left] at  (1.25,.62);
            \end{tikzpicture}
            \caption{IRCNN \\ ($28.60$dB) }
        \end{subfigure}
    \begin{subfigure}[b]{.17\linewidth}
            \centering
            \begin{tikzpicture}[spy using outlines={rectangle,blue,magnification=4,size=1.2cm, connect spies}]
            \node {\includegraphics[height=2.5cm]{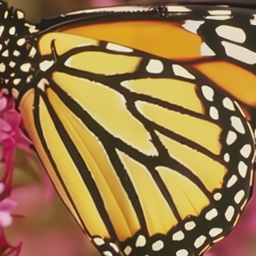}};
            \spy on (-1.1,-0) in node [left] at  (1.25,.62);
            \end{tikzpicture}
            \caption{DPIR \\($29.48$dB)}
        \end{subfigure}
    \begin{subfigure}[b]{.17\linewidth}
            \centering
            \begin{tikzpicture}[spy using outlines={rectangle,blue,magnification=4,size=1.2cm, connect spies}]
            \node {\includegraphics[height=2.5cm]{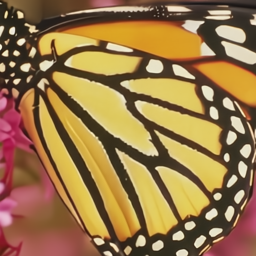}};
            \spy on (-1.1,-0) in node [left] at (1.25,.62);
            \end{tikzpicture}
            \caption{GSPnP-HQS ($29.58$dB)}
        \end{subfigure}
    \begin{minipage}{.52\linewidth}
    \begin{subfigure}[b]{.32\linewidth}
            \centering
            \begin{tikzpicture}[spy using outlines={rectangle,blue,magnification=4,size=1.2cm, connect spies}]
            \node {\includegraphics[height=2.5cm]{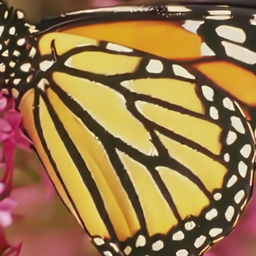}};
            \spy on (-1.1,-0) in node [left] at (1.25,.62);
            \end{tikzpicture}
            \caption{Prox-PnP-PGD \\ ($29.19$dB) }
        \end{subfigure}
    \begin{subfigure}[b]{.32\linewidth}
            \centering
            \begin{tikzpicture}[spy using outlines={rectangle,blue,magnification=4,size=1.2cm, connect spies}]
            \node {\includegraphics[height=2.5cm]{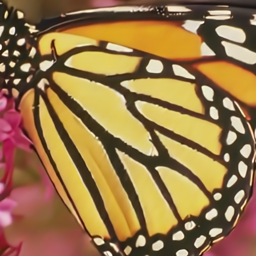}};
            \spy on  (-1.1,0) in node [left] at (1.25,.62);
            \end{tikzpicture}
            \caption{Prox-PnP-DRSdiff \\ ($29.20$dB)}
        \end{subfigure}
    \begin{subfigure}[b]{.32\linewidth}
            \centering
            \begin{tikzpicture}[spy using outlines={rectangle,blue,magnification=4,size=1.2cm, connect spies}]
            \node {\includegraphics[height=2.5cm]{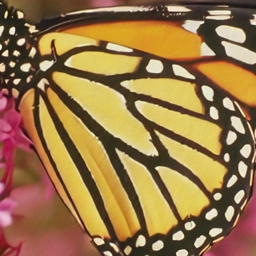}};
            \spy on  (-1.1,0)in node [left] at (1.25,.62);
            \end{tikzpicture}
            \caption{Prox-PnP-DRS \\($29.41$dB)}
        \end{subfigure} 
    \begin{subfigure}[b]{.32\linewidth}
        \centering
        \includegraphics[width=3cm]{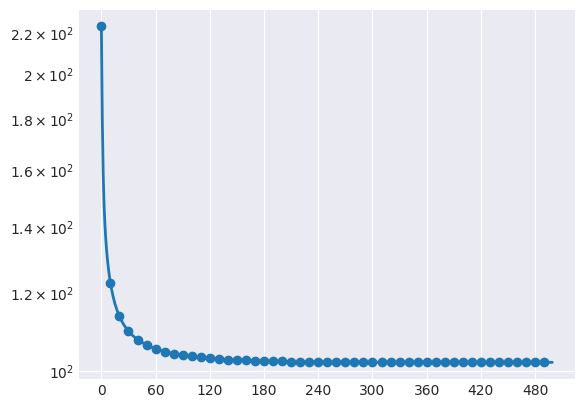}
        \caption{$F_{\lambda,\sigma}(x_k)$ \\ PnP-PGD}
    \end{subfigure}
    \begin{subfigure}[b]{.32\linewidth}
        \centering
        \includegraphics[width=3cm]{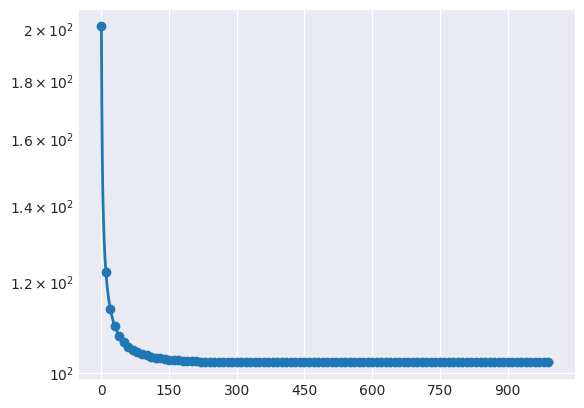}
        \caption{$F_{\lambda,\sigma}(x_k)$ \\ PnP-DRSdiff}
    \end{subfigure}
    \begin{subfigure}[b]{.32\linewidth}
        \centering
        \includegraphics[width=3cm]{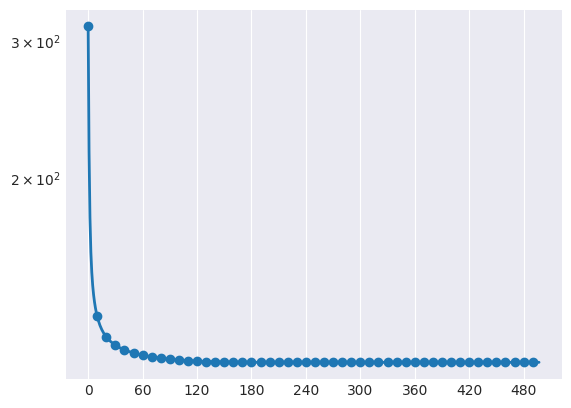}
        \caption{$F_{\lambda,\sigma}(x_k)$ \\ PnP-DRS}
    \end{subfigure}
    \end{minipage}
    \begin{minipage}{.35\linewidth}
    \begin{subfigure}[b]{\linewidth}
        \centering
        \includegraphics[height=3.5cm]{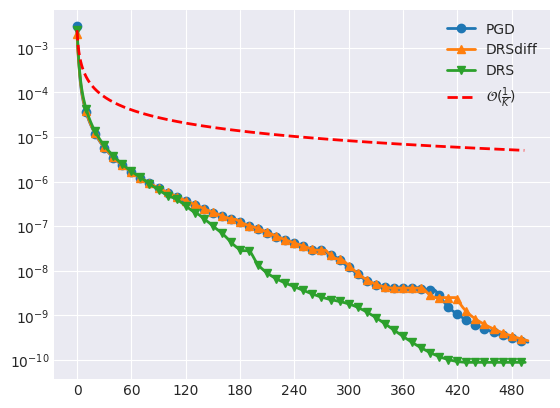}
        \caption{\footnotesize $\min_{i \leq k}\norm{x_{i+1}-x_i}^2$}
    \end{subfigure}
    \end{minipage}
    \caption{Deblurring with various methods of ``butterfly" degraded with the indicated blur kernel and input noise level $\nu=0.03$.
    }
    \label{fig:deblurring2}
    \end{figure}

    \begin{figure*}[!ht] \centering
    \captionsetup[subfigure]{justification=centering}
    \begin{subfigure}[b]{.17\linewidth}
            \centering
            \begin{tikzpicture}[spy using outlines={rectangle,blue,magnification=5,size=1.2cm, connect spies}]
            \node {\includegraphics[height=2.5cm]{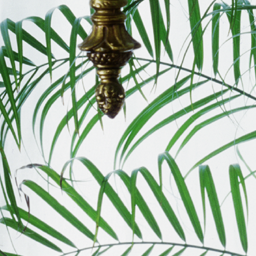}};
            \spy on (-0.3,0.18) in node [left] at  (1.25,.62);
             \node at (-0.89,-0.89) {\includegraphics[scale=0.8]{images/SR/kernels/kernel_3.png}};
            \end{tikzpicture}
            \caption{Clean \\~}
        \end{subfigure}
    \begin{subfigure}[b]{.17\linewidth}
            \centering
            \begin{tikzpicture}[spy using outlines={rectangle,blue,magnification=5,size=.6cm, connect spies}]
            \node {\includegraphics[height=1.25cm]{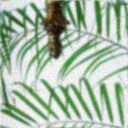}};
            \spy on (-0.15,0.09) in node [left] at  (1.5,1.25);
            \end{tikzpicture}
            \caption{Observed\\~}
        \end{subfigure}
    \begin{subfigure}[b]{.17\linewidth}
            \centering
            \begin{tikzpicture}[spy using outlines={rectangle,blue,magnification=5,size=1.2cm, connect spies}]
            \node {\includegraphics[height=2.5cm]{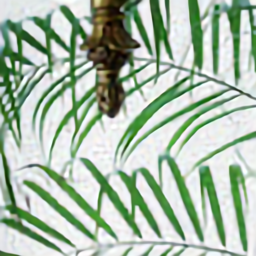}};
            \spy on (-0.3,0.18) in node [left] at (1.25,.62);
            \end{tikzpicture}
            \caption{IRCNN \\($22.82$dB)}
        \end{subfigure}
    \begin{subfigure}[b]{.17\linewidth}
            \centering
            \begin{tikzpicture}[spy using outlines={rectangle,blue,magnification=5,size=1.2cm, connect spies}]
            \node {\includegraphics[height=2.5cm]{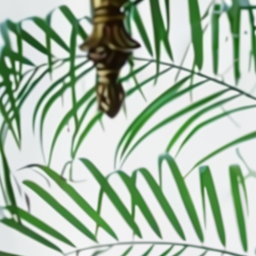}};
            \spy on (-0.3,0.18) in node [left] at (1.25,.62);
            \end{tikzpicture}
            \caption{DPIR \\($23.97$dB)}
        \end{subfigure}
    \begin{subfigure}[b]{.17\linewidth}
            \centering
            \begin{tikzpicture}[spy using outlines={rectangle,blue,magnification=5,size=1.2cm, connect spies}]
            \node {\includegraphics[height=2.5cm]{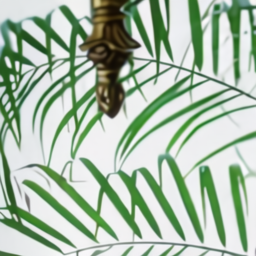}};
            \spy on (-0.3,0.18) in node [left] at (1.25,.62);
            \end{tikzpicture}
            \caption{GSPnP-HQS \\($24.81$dB)}
        \end{subfigure}
    \begin{minipage}{.52\linewidth}
    \begin{subfigure}[b]{.32\linewidth}
            \centering
            \begin{tikzpicture}[spy using outlines={rectangle,blue,magnification=5,size=1.2cm, connect spies}]
            \node {\includegraphics[height=2.5cm]{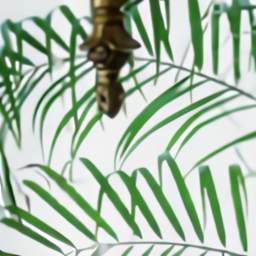}};
            \spy on (-0.3,0.18) in node [left] at (1.25,.62);
            \end{tikzpicture}
            \caption{pGSPnP-PGD ($23.96$dB) }
        \end{subfigure}
    \begin{subfigure}[b]{.32\linewidth}
            \centering
            \begin{tikzpicture}[spy using outlines={rectangle,blue,magnification=5,size=1.2cm, connect spies}]
            \node {\includegraphics[height=2.5cm]{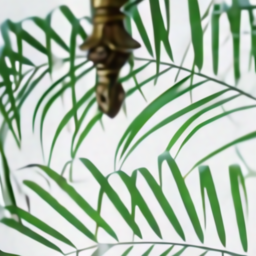}};
            \spy on (-0.3,0.18) in node [left] at (1.25,.62);
            \end{tikzpicture}
            \caption{pGSPnP-DRSdiff ($23.96$dB)}
        \end{subfigure}
    \begin{subfigure}[b]{.32\linewidth}
            \centering
            \begin{tikzpicture}[spy using outlines={rectangle,blue,magnification=5,size=1.2cm, connect spies}]
            \node {\includegraphics[height=2.5cm]{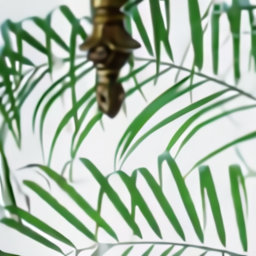}};
            \spy on (-0.3,0.18) in node [left] at (1.25,.62);
            \end{tikzpicture}
            \caption{pGSPnP-DRS ($24.36$dB)}
        \end{subfigure} \\
    \begin{subfigure}[b]{.32\linewidth}
        \centering
        \includegraphics[width=3cm]{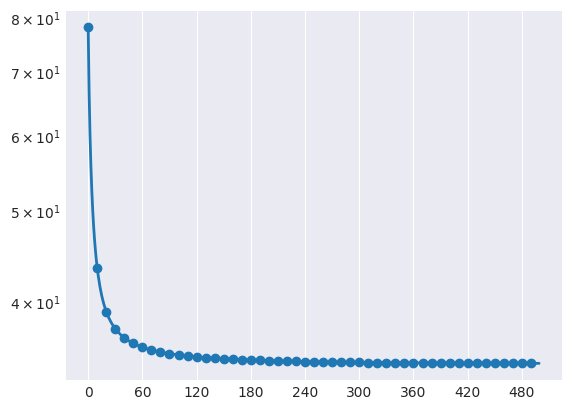}
        \caption{$F_{\lambda,\sigma}(x_k)$ \\ PnP-PGD}
    \end{subfigure}
    \begin{subfigure}[b]{.32\linewidth}
        \centering
        \includegraphics[width=3cm]{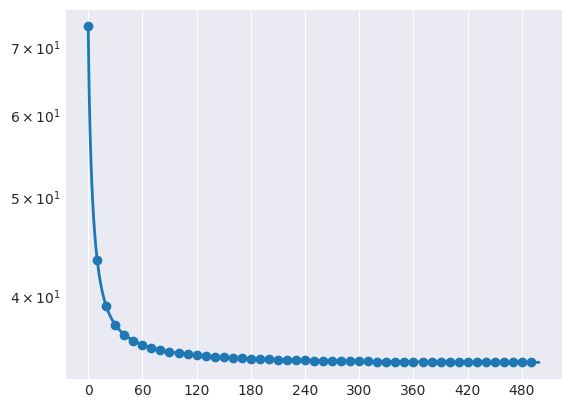}
        \caption{$F_{\lambda,\sigma}(x_k)$ \\ PnP-DRSdiff}
    \end{subfigure}
    \begin{subfigure}[b]{.32\linewidth}
        \centering
        \includegraphics[width=3cm]{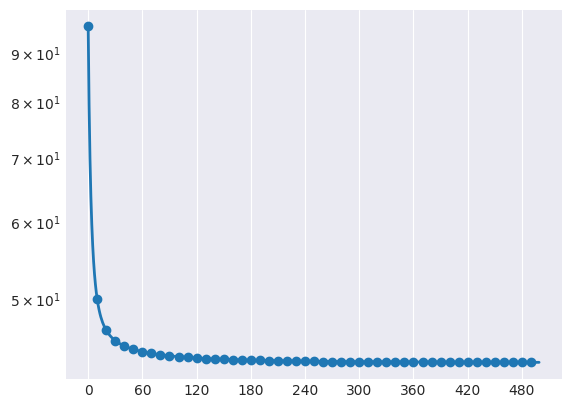}
        \caption{$F_{\lambda,\sigma}(x_k)$ \\ PnP-DRS}
    \end{subfigure}
    \end{minipage}
    \begin{minipage}{.35\linewidth}
    \begin{subfigure}[b]{\linewidth}
        \centering
        \includegraphics[height=3.5cm]{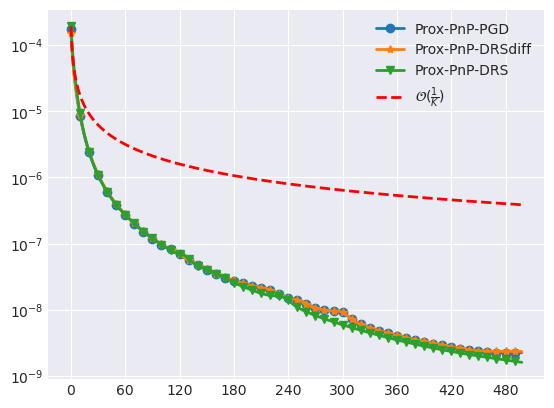}
        \caption{$\min_{i \leq k}\norm{x_{i+1}-x_i}^2$}
    \end{subfigure}
    \end{minipage}
    \caption{Super-resolution with various methods of ``leaves" degraded with the indicated blur kernel, scale $2$ and input noise level $\nu=0.03$.}
    \label{fig:SR2}
    \end{figure*}

    \clearpage

    \subsection{Robustness to initialization}
    \label{app:init}

Table~\ref{tab:init} shows the behavior of the three Prox-PnP methods %
initialized with the degraded image $y$ + random Gaussian noise. The methods are robust for small noise perturbations and converge to different local minima for stronger noise.
PnP-DRS remains consistent for stronger noise as it is run with a larger $\lambda$ value, which tends to ``convexify" the objective function.
\begin{table}[!ht]
    \centering
    \begin{tabular}{cccc}
        $\sigma_{init}$ $(./255)$ & PGD & DRSdiff & DRS \\
        \midrule
        $0$ & $29.41$ & $29.41$ & $29.65$ \\ 
        $10$& $29.41$ & $29.41$ & $29.65$ \\ 
        $20$ & $27.34$ & $25.92$ & $29.65$ \\ 
        $40$ & $12.52$ & $12.54$ & $29.63$ \\
        $60$ & $12.27$ & $12.28$ & $18.65$ \vspace{-0.2cm}
    \end{tabular}    
    \caption{\label{tab:init}\footnotesize PSNR after initialization with $x_0 = y + n$ with \hbox{$n \sim \mathcal{N}(0,\sigma^2_{init}\id)$.}}
\end{table}

\subsection{Convergence comparison with DPIR}
\label{app:DPIR}

DPIR \cite{zhang2021plug} performs well in PSNR when run on very few iterations but we show Figure~\ref{fig:DPIR} below that it actually does not converge asymptotically, contrary to our PnP methods. 
Instead of being runned on $8$ iterations, DPIR is here runned on $2000$ iterations. We observe that the both the PSNR and the norm of the residual do not converge in practice.  Such a divergence strongly limits DPIR outputs interpretation and numerical control.

\begin{figure}[ht] \centering
    \begin{subfigure}[b]{0.45\linewidth}
        \centering
        \includegraphics[scale=0.4]{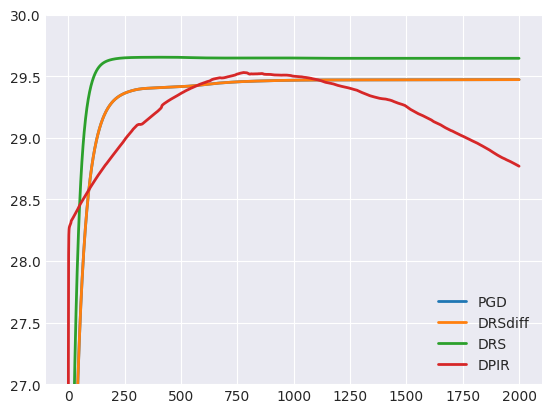}
        \caption{PSNR($x_k)$}
    \end{subfigure}
\begin{subfigure}[b]{.45\linewidth}
    \centering
       \includegraphics[scale=0.4]{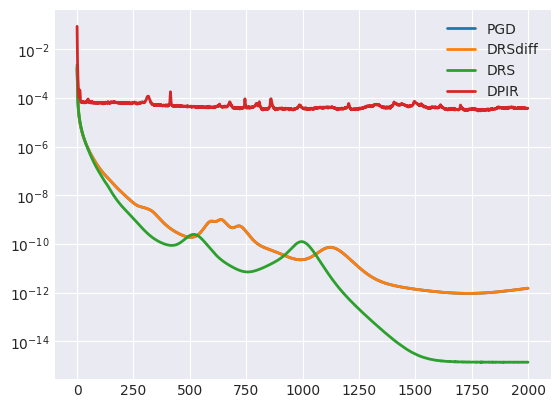}
        \caption{$\norm{x_{k+1}-x_k}^2$}
    \end{subfigure}
    \caption{Evolution, for DPIR and our PnP algorithms, on $2000$ iterations of the PSNR and of the squared norm of the residual when deblurring the blurred "starfish" image from~Figure~\ref{fig:deblurring1}.}
    \label{fig:DPIR}
\end{figure}

 \subsection{Inpainting experiment}
    \label{app:inpainting}

    We finally apply PnP-DRS to image inpainting,  with the degradation model $y = Ax$
    where $A$ is a diagonal matrix with values in $\{0,1\}$.
 In this context, the data-fidelity term is the indicator function of the set $\mathcal{A} = \{x \ | \ Ax=y\}$: $f(x) = \imath_{\mathcal{A}}(x)$ which, by definition, equals $0$ on $\mathcal{A}$ and $+\infty$ elsewhere. The convergence of PnP-DRS for such a non-differentiable data-fidelity term is  ensured by Theorem~\ref{thm:PnP-DRS2}.
    In our experiments, the diagonal of $A$ is filled with Bernoulli random variables with parameter $p=0.5$. We run our algorithm with  $\sigma = 10/255$.
   As can be observed in  Figure~\ref{fig:inpainting}, PnP-DRS restores the input images with high accuracy. Furthermore, convergence of the residual at rate $\mathcal{O}(\frac{1}{k})$ is empirically confirmed.

 In our experiments, the diagonal of $A$ is filled with Bernoulli random variables with parameter $p=0.5$.
    We run our PnP algorithm with $\lambda = 2$ and $\sigma = 15/255$.
    The algorithm is initialized with $10$ iterations of the algorithm at larger noise level $\sigma = 50/255$ andterminates when the number of iterations exceeds~$K=200$.
 \begin{figure}[ht] %
        \begin{subfigure}[b]{.24\linewidth}\centering
            \includegraphics[scale=0.35]{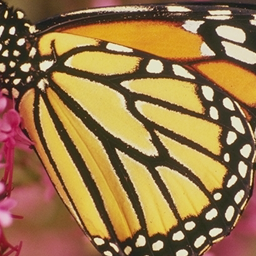}
            \caption*{Clean\\~}
        \end{subfigure}
    \begin{subfigure}[b]{.24\linewidth}\centering
           \includegraphics[scale=0.35]{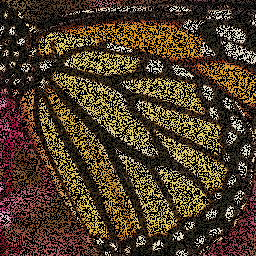}
            \caption*{Observed \\~}
        \end{subfigure}
    \begin{subfigure}[b]{.24\linewidth}\centering
            \includegraphics[scale=0.35]{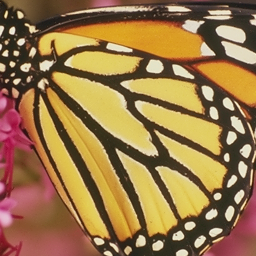}
            \caption*{Prox-PnP-DRS\\ ($31.14$dB)}
        \end{subfigure}
    \begin{subfigure}[b]{.24\linewidth}\centering
        \includegraphics[height=3.5cm]{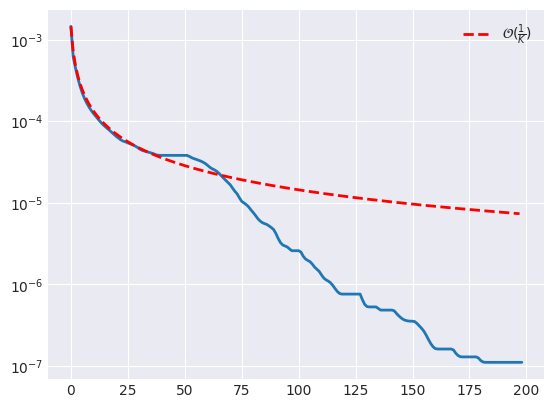}
        \caption*{$\min_{0 \leq i \leq k}\norm{x_{i+1}-x_i}^2$}
    \end{subfigure}
    
    \begin{subfigure}[b]{.24\linewidth}\centering
            \includegraphics[scale=0.35]{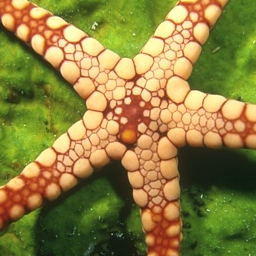}
            \caption*{Clean \\~}
        \end{subfigure}
    \begin{subfigure}[b]{.24\linewidth}\centering
           \includegraphics[scale=0.35]{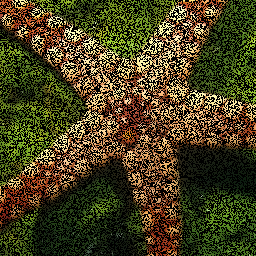}
            \caption*{Observed \\~}\centering
        \end{subfigure}
    \begin{subfigure}[b]{.24\linewidth}\centering
            \includegraphics[scale=0.35]{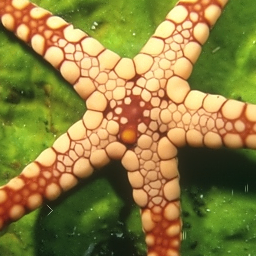}
            \caption*{Prox-PnP-DRS \\  ($33.18$dB)}
        \end{subfigure}
    \begin{subfigure}[b]{.24\linewidth}
        \centering
        \includegraphics[height=3.5cm]{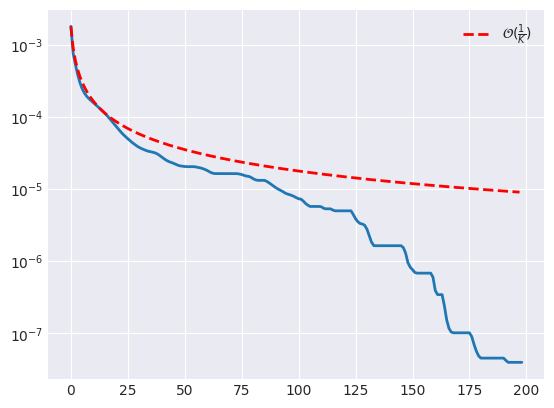}
        \caption*{$\min_{0 \leq i \leq k}\norm{x_{i+1}-x_i}^2$}
    \end{subfigure}
    \caption{Inpainting results  with pixels randomly masked with probability ${p=0.5}$.  In the last column, we show the evolution of  along the iterations.}
    \label{fig:inpainting}
    \end{figure}

\end{document}